\theoremstyle{plain}
\newtheorem{theorem}{Theorem}
\newtheorem{lemma}{Lemma}
\newtheorem{example}{Example}
\newtheorem{prop}{Proposition}
\newtheorem{question}{Question}
\newcommand\diag{\operatorname{diag}}
\newcommand\tr{\operatorname{tr}}
\DeclareMathOperator*{\Llim}{Llim}
\newcommand\rank{\operatorname{rank}}
\newcommand{\C}{\mathbb{C}}
\newcommand{\N}{\mathbb{N}}
\newcommand{\Z}{\mathbb{Z}}
\newcommand{\R}{\mathbb{R}}
\newcommand{\irB}{\mathcal{B}}
\newcommand{\irH}{\mathcal{H}}
\newcommand{\irM}{\mathcal{M}}
\newcommand{\irP}{\mathcal{P}}
\newcommand{\irU}{\mathcal{U}}
\newcommand{\irW}{\mathcal{W}}
\newcommand\PWB{\irP\irW\irB}
\begin{document}

\title{Characterisation of Ces\`aro and $L$-Asymptotic Limits of Matrices}

\author{Gy\"orgy P\'al Geh\'er}

\begin{abstract}
The main goal of this paper is to characterise all the possible Ces\`aro and $L$-asymptotic limits of powerbounded, complex matrices. The investigation of the $L$-asymptotic limit of a powerbounded operator goes back to Sz.-Nagy and it shows how the orbit of a vector behaves with respect to the powers. It turns out that the two types of asymptotic limits coincide for every powerbounded matrix and a special case is connected to the description of the products $SS^*$ where $S$ runs through those invertible matrices which have unit columnvectors. We also show that for any powerbounded operator acting on an arbitrary complex Hilbert space the norm of the $L$-asymptotic limit is greater than or equal to 1, unless it is zero; moreover, the same is true for the Ces\`aro asymptotic limit of a not necessarily powerbounded operator, if it exists.
\end{abstract}

\maketitle

\let\thefootnote\relax\footnote{
AMS Subject Classifications (2010): 47A45, 47B65. \\
Keywords: $L$-asymptotic limit, Ces\`aro asymptotic limit,  powerbounded matrix, positive semi-definite matrix.\\
This research was realized in the frames of T\'AMOP 4.2.4. A/2-11-1-2012-0001 ''National Excellence Program - Elaborating and operating an inland student and researcher personal support system''. The project was subsidized by the European Union and co-financed by the European Social Fund.\\
The author was also supported by the "Lend\"ulet" Program (LP2012-46/2012) of the Hungarian Academy of Sciences.}

\section{Introduction}

Let us endow $\C^d$ ($d\in\N := \{1,2,\dots\}$) with the usual inner-product $\langle\cdot,\cdot\rangle$. Both of the symbols $\irM_d$ and $\C^{d\times d}$ will stand for the set of all $d\times d$ matrices. We will denote the elements of the usual standard basis by $e_1,\dots e_d\in\C^d$. Throughout this paper the notion of matrices and operators on $\C^d$ will be identified in the following natural ways: on the one hand, any operator $T$ will be associated with the matrix $\big(\langle T e_j, e_i\rangle\big)_{i,j=1}^n$; on the other hand, if we have a $d\times d$ complex matrix then the corresponding operator will be precisely that one for which the image of $e_j$ is the $j$th columnvector of our matrix ($1\leq j\leq d$).

Since our motivation, which goes back to B. Sz.-Nagy and will be explained in the next paragraphs, comes from Hilbert space operator theory let us now define some necessary definitions. Let $\irH$ be a (not necessarily finite dimensional) complex Hilbert space with the inner-product $\langle\cdot,\cdot\rangle$, and $\irB(\irH)$ the set of all bounded linear operators acting on it. An operator $A\in\irB(\irH)$ is said to be \emph{positive}, in notation: $A\in\irB_+(\irH)$, if $\langle Ah,h\rangle \geq 0$ is satisfied for each vector $h\in\irH$. In the case of matrices we also call $A$ a \emph{positive semi-definite matrix}. If the positive semi-definite matrix $A$ is also invertible then we refer to it as a \emph{positive definite matrix}. The set of all $d\times d$ unitaries and invertible matrices will be denoted by $\irU_d$ and $GL_d$, respectively. For any $T\in\irB(\irH)$ the quantity $\|T\|$ will stand for the operator norm of $T$.

Let us consider a contraction $T\in\irB(\irH)$ (i. e. $\|T\|\leq 1$) and the sequence $\{T^{*n}T^n\}_{n=1}^\infty$ of positive operators which is trivially decreasing. Therefore there exists a unique limit
\[ A_T = A := \lim_{n\to\infty} T^{*n}T^n \]
in the strong operator-topology (the topology of point-wise convergence). Clearly, $A_T$ is a positive operator, and it will be called the \emph{asymptotic limit} of the contraction $T$ (see \cite{Ku1}, \cite{Ku2}, \cite{NFBK} for more details). The asymptotic limit tells us how the orbit of a vector behaves, namely the following is true 
\[ \lim_{n\to\infty} \|T^n h\| = \lim_{n\to\infty}\sqrt{\langle T^n h,T^n h\rangle} = \sqrt{\langle A_T h,h\rangle} = \big\|\sqrt{A_T}h\big\| = \big\|\sqrt{A_T}Th\big\|. \]

In \cite{SzN} Sz.-Nagy considered powerbounded operators and defined a generalization of the context above. An operator $T\in\irB(\irH)$ is said to be \emph{powerbounded}, in notation $T\in\PWB(\irH)$ (or $T\in\PWB_d$ if $\irH = \C^d$) if there exists a bound $M > 0$ such that $\|T^n\| < M$ holds for each $n\in\N$. In order to define Sz.-Nagy's asymptotic limit for all powerbounded operators we need the notion of Banach limits. The Banach space of bounded complex sequences is denoted by $\ell^\infty$. We say that the bounded linear functional 
\[ L\colon \ell^\infty \to \C, \quad \underline{x} \mapsto \Llim_{n\to\infty} x_n \]
is a \emph{Banach limit} if the next four points are satisfied:
\begin{itemize}
\item $\|L\| = 1$,
\item we have $\Llim_{n\to\infty} x_n = \lim_{n\to\infty} x_n$ for every convergent sequence,
\item $L$ is positive (i. e. if $x_n\geq 0$ for all $n\in\N$, then $\Llim_{n\to\infty} x_n \geq 0$) and
\item $L$ is shift-invariant (i. e. $\Llim_{n\to\infty} x_n = \Llim_{n\to\infty} x_{n+1}$).
\end{itemize}
Note that a Banach limit is never multiplicative. Now let us take an arbitrary $T\in\PWB(\irH)$ and fix a Banach limit $L$. We consider the following sesqui-linear form
\[ \irH\times\irH \to \C, \quad (x,y)\mapsto \Llim_{n\to\infty}\langle T^nx,T^ny\rangle \]
which is obviously bounded and hence there exists a unique positive operator $A_{T,L}\in\irB_+(\irH)$ which represents this form, i. e.
\[ \Llim_{n\to\infty}\langle T^nx,T^ny\rangle = \langle A_{T,L}x,y\rangle \qquad (\forall \; x,y\in\irH). \]
This $A_{T,L}$ (which obviously depends on the choice of $L$, see the last section) will be called here the \emph{$L$-asymptotic limit} of $T$ (see e. g. \cite{Ke_isom_as}, \cite{NFBK} or \cite{SzN} for more details). If $T$ is a $d\times d$ matrix, we obtain $A_{T,L}$ by taking the entry-wise $L$-limit of the matrix-sequence $\{T^{*n}T^n\}_{n=1}^\infty$. Clearly, we have
\[ \Llim_{n\to\infty} \|T^nx\|^2 = \big\|\sqrt{A_{T,L}}x\big\|^2 = \big\|\sqrt{A_{T,L}}Tx\big\|^2, \]
but this is usually not true if we erase the squares. 

The notion of asymptotic limits and its generalizations play an important role in the hyper-invariant subspace problem (see e. g. \cite{BK} \cite{Ca}, \cite{Ke_gen_Toep} \cite{Ke_isom_as}, \cite{Ke_reg_norm_seq}, \cite{KT}, \cite{NFBK} or \cite{SS}). For instance it can be verified quite easily that for any $T\in\PWB(\irH)$ the subspace $\irH_0(T) = \irH_0 := \ker(A_{T,L}) = \{ x\in\irH \colon T^n x\to 0 \}$ is hyper-invariant for $T$. This subspace is called the \emph{stable subspace} of $T$. It can be shown that $\irH_0 = \ker(A_{T,L}) = \ker(A_T)$ is fulfilled for contractions.

Using the $L$-asymptotic limit, Sz.-Nagy was managed to prove in \cite{SzN} that any operator $T\in\irB(\irH)$ is similar to a unitary operator exactly when both $T$ and $T^{-1}$ are powerbounded (see a re-phrasing later in this section). In \cite{Du}, E. Durszt used the operator $A_T$ to prove a generalization of the Rota model (see \cite{Ro}) for completely non-unitary contractions (see also \cite{Ku1}). In \cite{NFBK} the reader can find a construction of the minimal unitary dilation of a contraction which uses the asymptotic limit. C. S. Kubrusly pointed out in \cite{Ku2} that it is important to find multifarious characterisations for the case when a contraction has projective asymptotic limit. In the case when the asymptotic limit is injective, one can conclude some properties of $T$ from the properties of a corresponding isometry which is defined in a natural way (see e. g. \cite{Ge_tree}). 

In \cite{Ge} the present author characterised all of the possible asymptotic limits of Hilbert space contractions. It was noted there that any contractive matrix $T\in\C^{d\times d}$ has an idempotent asymptotic limit. In what follows, we are interested in the characterization of all the possible $L$-asymptotic limits of powerbounded matrices. We note that the proofs in \cite{Ge} used infinite dimensional techniques, so in order to proceed we need other ideas.

Before stating our main theorems we need the definition of the Ces\`aro asymptotic limit which is an other possible generalization of the notion of asymptotic limit for contractions. We call $A_{T,C}\in\irB(\irH)$ the \emph{Ces\`aro asymptotic limit} of $T\in\irB(\irH)$ if $\frac{1}{n}\sum_{j=1}^n T^{*j}T^j \to A_{T,C}$ holds in the weak operator-topology (the topology of point-wise weak convergence). In the matrix case we can take entry-wise convergence of the Ces\`aro means instead. Obviously, if $T$ is a contraction then $A_{T,C}$ exists and coincides with $A_T$. But generally for a $T\in\PWB(\irH)$ the Ces\`aro limit does not exist always. We also note that even the existence of the Ces\`aro asymptotic limit usually does not imply the powerboundedness of $T$. Counterexamples will be provided in the last section. However, as we will see from Theorem \ref{C^d_Cer_mean_thm}, these are equivalent conditions for matrices. Trivially, if the Ces\`aro asymptotic limit of $T\in\irB(\irH)$ exists, then
\begin{equation}\label{Cesaro_as_beh_eq}
\big\|\sqrt{A_{T,C}}h\big\| = \sqrt{\langle A_{T,C} h,h\rangle} = \lim_{n\to\infty}\sqrt{\frac{1}{n}\sum_{j=1}^n \|T^j h\|^2}
\end{equation}
holds for any $h\in\irH$. We note that 
\begin{equation}\label{Cesaro_iter_eq} 
\begin{gathered}
\big\|\sqrt{A_{T,C}}T h\big\| = \lim_{n\to\infty}\sqrt{\frac{1}{n}\sum_{j=2}^{n+1} \|T^j h\|^2}\\
= \lim_{n\to\infty}\sqrt{\frac{n+1}{n}\cdot\Bigg[\frac{1}{n+1}\sum_{j=1}^{n+1} \|T^j h\|^2\Bigg] - \frac{1}{n}\|h\|^2} = \big\|\sqrt{A_{T,C}}h\big\|. 
\end{gathered}
\end{equation}
If for the powerbounded $T$ the Ces\`aro asymptotic limit $A_{T,C}$ exists then $\irH_0 = \ker A_{T,C}$ holds. This can be verified rather easily.

Obviously for each operator $T\in\irB(\irH)$ and unitary operator $U\in\irB(\irH)$ we have one of the following assertions
\begin{equation}\label{unitary_eqv_eq}
A_{UTU^*} = U A_T U^*, \quad A_{UTU^*,L} = U A_{T,L} U^*, \quad A_{UTU^*,C} = U A_{T,C} U^* 
\end{equation}
if at least one side of the corresponding equation makes sense.

The main aim of this work is to provide the characterisation of all possible $L$-asymptotic limits in finite dimension. The first theorem says that the $L$ asymptotic limit of a powerbounded matrix coincides with the Ces\`aro asymptotic limit and it will be proved in the next section. 

\begin{theorem}\label{C^d_B_lim_Cer_thm}
For every matrix $T\in \PWB(\C^d)$ $A_{T,C} = A_{T,L}$ holds for all Banach limits $L$.
\end{theorem}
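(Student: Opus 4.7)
\emph{Proof plan.} The approach is to use the Jordan canonical form to reduce to an entry-wise computation. Since $T \in \PWB(\C^d)$, every eigenvalue of $T$ lies in the closed unit disc, and any eigenvalue on $\T$ must be semi-simple (otherwise the associated Jordan block would produce powers of polynomial growth, contradicting $T\in\PWB_d$). Consequently, one can find $Q \in GL_d$ together with a splitting $\C^d = \C^{d_0}\oplus\C^{d_1}$ such that $Q^{-1}TQ = J_0 \oplus J_1$, where $J_0$ has spectral radius strictly less than $1$ and $J_1 = \diag(\lambda_1,\dots,\lambda_{d_1})$ with $|\lambda_k|=1$ for every $k$. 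Setting $D_n := J_0^n \oplus J_1^n$, one then obtains
$$T^{*n}T^n \;=\; Q^{-*}\,D_n^*\,(Q^*Q)\,D_n\,Q^{-1}.$$

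Next I would exploit that entry-wise Cesàro averaging and entry-wise $L$-limit are both linear operations which commute with pre- and post-multiplication by the fixed matrices $Q^{-*}$ and $Q^{-1}$. It therefore suffices to show that both limits exist and coincide entry-wise for the sequence $D_n^*(Q^*Q)D_n$. Partitioning $Q^*Q$ into four blocks $P_{\alpha\beta}$ ($\alpha,\beta\in\{0,1\}$) matching the direct sum, the three blocks involving $J_0$ --- namely $J_0^{*n}P_{00}J_0^n$, $J_0^{*n}P_{01}J_1^n$, and $J_1^{*n}P_{10}J_0^n$ --- all tend to zero entry-wise, because $\|J_0^n\|\to 0$ geometrically while $\|J_1^n\|$ stays bounded. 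On any genuinely convergent sequence both the Cesàro mean and every Banach limit recover the ordinary limit, so they agree (and equal zero) on each of these three blocks.

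The remaining block $J_1^{*n}P_{11}J_1^n$ has $(i,j)$-entry equal to $\overline{\lambda_i}^n\lambda_j^n (P_{11})_{ij} = \mu_{ij}^n (P_{11})_{ij}$, where $\mu_{ij} := \lambda_j\overline{\lambda_i} \in \T$. When $\mu_{ij}=1$ this is a constant sequence and there is nothing to check. When $\mu_{ij}\neq 1$, a geometric-sum calculation shows $\tfrac{1}{n}\sum_{k=1}^n\mu_{ij}^k \to 0$, while shift-invariance and linearity of $L$ yield $\Llim_{n\to\infty}\mu_{ij}^n = \Llim_{n\to\infty}\mu_{ij}^{n+1} = \mu_{ij}\Llim_{n\to\infty}\mu_{ij}^n$, forcing $\Llim_{n\to\infty}\mu_{ij}^n = 0$. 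Hence Cesàro means and $L$-limits also agree entry-wise on this block, and reassembling via $Q^{-*}$ and $Q^{-1}$ yields $A_{T,C} = A_{T,L}$. The main (and essentially only) technical point is the Jordan reduction step; once that is in place, the identification boils down to the elementary fact that every sequence of the form $(\mu^n)_{n\in\N}$ with $\mu\in\T$ is almost convergent, so its Cesàro mean and every Banach limit coincide.
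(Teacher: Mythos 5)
Your proposal is correct and follows essentially the same route as the paper: Jordan reduction of a powerbounded matrix to (spectral radius $<1$) $\oplus$ (unitary diagonal), conjugation to reduce to the sequence $D_n^*(Q^*Q)D_n$, and the entry-wise observation that for $\mu\in\T$ both the Ces\`aro mean and every Banach limit of $(\mu^n)_n$ equal $0$ unless $\mu=1$ (the latter via shift-invariance and linearity). The paper packages the same computation through its Proposition \ref{J_dec_pwb_mx_prop}, equation (\ref{limit_eq}) and Theorem \ref{C^d_Cer_mean_thm}, so no substantive difference remains.
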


Note that in the infinite dimensional case it can happen for a powerbounded operator $T$ that the Ces\`aro asymptotic limit exists and there is a Banach limit $L$ such that $A_{T,L} \neq A_{T,C}$ (see the last section). The theorem in \cite{SzN} can be re-phrased as follows: an operator $T\in\irB(\irH)$ is similar to a unitary operator if and only if $T$ is powerbounded and both $A_{T,L}$ and $A_{T^*,L}$ are invertible. In case when $A_{T,L}$ and $A_{T^*,L}$ are only injective, B. Sz.-Nagy and C. Foias called the powerbounded operator $T$ of class $C_{11}$, $T\in C_{11}(\irH)$ in notation. Clearly, in $\C^d$ these previous notions are the same. We will first characterise those Ces\`aro (or $L$-) asymptotic limits that arise from a $C_{11}$ powerbounded matrix or equivalently, that arise from an operator which is similar to a unitary one.

\begin{theorem}[Characterisation in the $C_{11}$ case]\label{C_11_char_thm}
The following statements are equivalent for a positive definite $A\in\irM_d$
\begin{itemize}
\item[\textup{(i)}] $A$ is the Ces\`aro asymptotic limit of a $T \in C_{11}(\C^d)$,
\item[\textup{(ii)}] $A$ is the $L$-asymptotic limit of a $T \in C_{11}(\C^d)$,
\item[\textup{(iii)}] if the eigenvalues of $A$ are $t_1,\dots,t_d > 0$, each of them is counted according to their multiplicities, then
\begin{equation}\label{C11_eq}
\frac{1}{t_1}+\dots+\frac{1}{t_d} = d
\end{equation}
holds,
\item[\textup{(iv)}] there is an $S\in GL_d$ with unit columnvectors such that 
\[ A = S^{*-1}S^{-1} = (SS^*)^{-1}. \]
\end{itemize}
\end{theorem}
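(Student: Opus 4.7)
The plan is to observe (i)$\Leftrightarrow$(ii) as an immediate consequence of Theorem \ref{C^d_B_lim_Cer_thm}, and then to close the cycle (iv)$\Rightarrow$(i)$\Rightarrow$(iii)$\Rightarrow$(iv). The equivalence (iii)$\Leftrightarrow$(iv) is a purely matrix-theoretic factorisation fact, while the implications involving (i) rely on the Ces\`aro iteration identity \eqref{Cesaro_iter_eq} and a concrete construction of $T$.

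For the easy direction (iv)$\Rightarrow$(iii) I would simply compute
\[ \tr(A^{-1}) \;=\; \tr(SS^*) \;=\; \sum_{j=1}^d \|Se_j\|^2 \;=\; d. \]
For the converse (iii)$\Rightarrow$(iv), I set $B:=A^{-1}$, note $\tr B = d$, and reduce to finding an orthonormal basis $v_1,\dots,v_d$ of $\C^d$ with $\langle Bv_j,v_j\rangle = 1$ for every $j$; the matrix $S := B^{1/2}W$, where $W$ is the unitary sending $e_j$ to $v_j$, then satisfies $SS^*=B=A^{-1}$ and has unit columns. Such a basis is produced by the Schur--Horn theorem (since $(1,\dots,1)$ is majorised by the spectrum of $B$), or more elementarily by induction on $d$: on any subspace $V\subseteq\C^d$ the compression of $B$ inherits average eigenvalue $\tr(B|_V)/\dim V$, so whenever this equals $1$ the intermediate value theorem applied to the (connected) numerical range supplies a unit $v\in V$ with $\langle Bv,v\rangle=1$; iterating in orthogonal complements yields the desired basis.

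For (iv)$\Rightarrow$(i), the plan is to fix a diagonal unitary $D=\diag(\lambda_1,\dots,\lambda_d)$ with pairwise distinct $\lambda_j\in\T$ and set $T:=SDS^{-1}$. Then $T$ is similar to the unitary $D$, hence $T\in C_{11}(\C^d)$. Writing $M:=S^*S$ (so $M_{jj}=1$), a direct calculation gives $(D^{-k}MD^k)_{ij} = (\lambda_j\bar\lambda_i)^k M_{ij}$. Since $\lambda_j\bar\lambda_i\in\T$ is distinct from $1$ whenever $i\ne j$, the corresponding Ces\`aro means vanish, whereas the diagonal entries remain $M_{ii}=1$. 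Hence $\tfrac{1}{n}\sum_{k=1}^n D^{-k}MD^k \to I$, and consequently $A_{T,C} = S^{*-1}S^{-1} = (SS^*)^{-1} = A$.

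The implication (i)$\Rightarrow$(iii) is, in my view, the main obstacle, since it requires uncovering the hidden unitary conjugate of $T$. The key observation is that \eqref{Cesaro_iter_eq} reads $\|\sqrt{A}\,Th\|=\|\sqrt{A}\,h\|$ for every $h\in\C^d$, so $U := \sqrt{A}\,T\sqrt{A}^{-1}$ is an isometry on $\C^d$ and therefore unitary. Substituting $T=\sqrt{A}^{-1}U\sqrt{A}$ into the Ces\`aro mean $\tfrac{1}{n}\sum_{k=1}^n T^{*k}T^k$ yields
\[ A \;=\; A_{T,C} \;=\; \sqrt{A}\cdot\Bigl(\lim_{n\to\infty}\tfrac{1}{n}\sum_{k=1}^n U^{-k}A^{-1}U^k\Bigr)\cdot\sqrt{A}, \]
which forces the bracketed limit to equal $I$. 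Taking traces on both sides and using the invariance $\tr(U^{-k}A^{-1}U^k)=\tr(A^{-1})$ for every $k$ gives $\tr(A^{-1})=d$, which is precisely \eqref{C11_eq}.
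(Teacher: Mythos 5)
Your proof is correct, but two of its steps take genuinely different routes from the paper, which closes the equivalences via (i)$\Leftrightarrow$(iv) and (iv)$\Leftrightarrow$(iii) rather than your cycle (iv)$\Rightarrow$(i)$\Rightarrow$(iii)$\Rightarrow$(iv). For (iii)$\Rightarrow$(iv) the paper does not use Schur--Horn: it reduces to diagonal $A$ by unitary equivalence and writes down the explicit matrix $S=\diag(\sqrt{1/t_1},\dots,\sqrt{1/t_d})\cdot U$ with $U=\big(\varepsilon^{(j-1)(k-1)}/\sqrt{d}\big)_{j,k}$, $\varepsilon=e^{2\pi i/d}$, verifying directly that $U$ is unitary and $S$ has unit columns; this is precisely an explicit witness for the special case of Schur--Horn you invoke (a unitary basis in which $B=A^{-1}$ has constant diagonal $1$), so your version is more conceptual but rests on a heavier tool (or on your inductive numerical-range argument, which is sound: the compression to each successive orthogonal complement keeps average eigenvalue $1$, and the numerical range of a Hermitian operator is an interval containing its normalised trace). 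For the implication out of (i), the paper proves (i)$\Rightarrow$(iv) by invoking the Jordan structure of powerbounded matrices (Proposition \ref{J_dec_pwb_mx_prop}: a $C_{11}$ matrix is $S\diag(\lambda_1,\dots,\lambda_d)S^{-1}$ with unimodular $\lambda_j$ and $S$ normalisable to unit columns) and then reads $A_{T,C}=S^{*-1}S^{-1}$ off \eqref{limit_eq}; you instead go straight to (iii) by noting from \eqref{Cesaro_iter_eq} that $\sqrt{A}\,T\sqrt{A}^{-1}$ is a unitary and exploiting unitary invariance of the trace. Your route is shorter and essentially reproves Sz.-Nagy's similarity theorem in finite dimensions as a by-product, while the paper's route yields the structural implication (i)$\Rightarrow$(iv) directly with an $S$ built from eigenvectors of $T$. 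Your (iv)$\Rightarrow$(i) construction and the trace computation for (iv)$\Rightarrow$(iii) coincide with the paper's.
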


The above theorem will be proved in Section \ref{char_sec}. After that in the same section we will be able to deal with the non-$C_{11}$ case. In the proof of that case we will use the $C_{11}$ case and a block-diagonalization of a special type of block matrices. We call a $T\in\PWB_d$ \emph{$l$-stable} ($0\leq l\leq d$) if $\dim\irH_0 = l$. We will see later that the $C_{11}$ class powerbounded matrices are exactly the 0-stable matrices and that for a $T\in\PWB_d$ the matrices $T$ and $T^*$ are simultaneously $l$-stable. In the forthcoming theorem the symbol $I_l$ stands for the $l\times l$ identity matrix, $0_{k}\in\irM_{k}$ is the zero matrix and $\oplus$ denotes the orthogonal sum.

\begin{theorem}[Characterisation of the non-$C_{11}$ case]\label{non-C11_char_thm}
The following four points are equivalent for a non-invertible $A\in\irB_+(\C^d)$ and $1\leq l< d$:
\begin{itemize}
\item[\textup{(i)}] there exists an $l$-stable $T\in\PWB(\C^d)$ such that $A_{T,C} = A$,
\item[\textup{(ii)}] there exists an $l$-stable $T\in\PWB(\C^d)$ such that $A_{T,L} = A$,
\item[\textup{(iii)}] let $k = d-l$, if $t_1, \dots t_k$ denotes the non-zero eigenvalues of $A$ counted with their multiplicities, then 
\[\frac{1}{t_1}+\dots+\frac{1}{t_k}\leq k,\]
\item[\textup{(iv)}] there exists such an $S\in GL_d$ that has unit columnvectors and 
\[ A = S^{*-1}(I_l\oplus 0_{k})S^{-1}. \] 
\end{itemize}
\end{theorem}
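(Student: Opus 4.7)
The plan is to prove (iv)$\Rightarrow$(i), (i)$\Rightarrow$(iii), and (iii)$\Rightarrow$(iv); the equivalence (i)$\Leftrightarrow$(ii) is free from Theorem \ref{C^d_B_lim_Cer_thm}.

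For (iv)$\Rightarrow$(i), I would mimic the $C_{11}$-case construction. Pick any $k\times k$ unitary diagonal matrix $D$ whose diagonal entries are pairwise distinct and define
\[ T:=S(0_l\oplus I_k\,\text{replaced by}\,0_l\oplus D)\,S^{-1}= S(0_l\oplus D)S^{-1}. \]
Powerboundedness is immediate, since $\|T^n\|\leq\|S\|\,\|S^{-1}\|$. A direct computation of $T^{*n}T^n$ shows that the distinct entries of $D$ together with the unit-column condition on $S$ (which forces the diagonal of the bottom-right $k\times k$ block of $S^*S$ to be $I_k$) kill all off-diagonal contributions in the Ces\`aro mean and yield $A_{T,C}=S^{*-1}(0_l\oplus I_k)S^{-1}=A$; since $\rank A=k$, the matrix $T$ is $l$-stable.

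For (i)$\Rightarrow$(iii), decompose $T$ with respect to $\C^d=\irH_0\oplus\irH_0^\perp$:
\[ T=\begin{pmatrix} T_0 & T_{01} \\ 0 & T_1 \end{pmatrix}. \]
The identity $T^*AT=A$ (obtained from \eqref{Cesaro_iter_eq} by polarisation) reduces on $\irH_0^\perp$ to $T_1^*A_1T_1=A_1$, where $A_1:=A|_{\irH_0^\perp}$ is positive definite. Thus $T_1$ is similar to a unitary on $\irH_0^\perp$ and so $T_1\in C_{11}(\irH_0^\perp)$. For $x=0\oplus y$ the expansion
\[ \|T^nx\|^2=\|T_1^ny\|^2+\|z_n\|^2,\qquad z_n:=\sum_{j=0}^{n-1}T_0^jT_{01}T_1^{n-1-j}y\in\irH_0, \]
and Ces\`aro averaging give $A_1\geq A_{T_1,C}>0$. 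Inverting and taking traces,
\[ \sum_{i=1}^k\frac{1}{t_i}=\tr(A_1^{-1})\leq\tr\bigl(A_{T_1,C}^{-1}\bigr)=k, \]
the final equality being Theorem \ref{C_11_char_thm} applied to the $C_{11}$-matrix $T_1$.

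Finally, (iii)$\Leftrightarrow$(iv) is a Schur--Horn argument on $A_1^{-1}$. The constant vector whose entries equal $\tr(A_1^{-1})/k\leq 1$ is majorised by the eigenvalue-vector $(1/t_i)_{i=1}^k$ of $A_1^{-1}$, so by Schur--Horn there is an orthonormal basis $\{v_j\}_{j=1}^k$ of $\irH_0^\perp$ along which $\langle A_1^{-1}v_j,v_j\rangle\leq 1$. Setting $s_j':=A_1^{-1/2}v_j$, extending these to unit vectors $s_{l+j}:=s_j'+\sqrt{1-\|s_j'\|^2}\,u$ for some fixed unit $u\in\ker A$, and taking $s_1,\dots,s_l$ to be any orthonormal basis of $\ker A$, produces an invertible $S=[s_1,\dots,s_d]$ with unit columns and $S^*AS=0_l\oplus I_k$. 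The direction (iv)$\Rightarrow$(iii) runs this calculation in reverse: decomposing each column of $S$ into its $\irH_0^\perp$- and $\ker A$-components, the unit-norm condition forces $\|s_j'\|^2\leq 1$ for $j>l$, so $\tr(A_1^{-1})=\sum_{j>l}\|s_j'\|^2\leq k$.

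The main obstacle is the inequality $A_1\geq A_{T_1,C}$ inside (i)$\Rightarrow$(iii): unlike in the $C_{11}$ case, these two positive operators need not coincide, and the strict slack is precisely what makes (iii) an inequality rather than an equality. Once this subtlety and the identification $T_1\in C_{11}(\irH_0^\perp)$ are secured, the rest reduces to Theorem \ref{C_11_char_thm} plus standard majorisation.
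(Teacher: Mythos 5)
Your proposal is correct, but it reaches the theorem by a genuinely different route than the paper, most notably in the key equivalence of (i) and (iii). The paper fixes a concrete model $T^* = \bigl(\begin{smallmatrix} 0 & RE^* \\ 0 & E^*\end{smallmatrix}\bigr)$ with $E\in C_{11}(\C^k)$ (justified by Theorem \ref{C^d_Cer_mean_thm} and Lemma \ref{structure_lem}), computes $A_{T,C}$ in closed form, block-diagonalises it with an explicit unitary $X$ built from $R$, and arrives at the parametrisation $0_l\oplus (I_k+Q)A_{E,C}(I_k+Q)$ with $Q\geq 0$, $\rank Q\leq l$; the inequality in (iii) then falls out of a trace computation, and its converse is obtained by rescaling one eigenvalue and inserting a rank-one $Q$. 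You instead extract (i)$\Rightarrow$(iii) from the intertwining $T^*A_{T,C}T=A_{T,C}$ (polarising \eqref{Cesaro_iter_eq}), which shows $T_1\in C_{11}(\irH_0^\perp)$, and from the pointwise comparison $A_1\geq A_{T_1,C}$ coming from discarding the $\irH_0$-component of the orbit; operator anti-monotonicity of the inverse and the trace identity of Theorem \ref{C_11_char_thm} finish it. This is shorter and makes transparent exactly where the slack in the inequality comes from (namely $A_1-A_{T_1,C}\geq 0$), and your Schur--Horn construction for (iii)$\Leftrightarrow$(iv) together with the explicit diagonal-unitary model for (iv)$\Rightarrow$(i) replaces the paper's terse ``handled very similarly to Theorem \ref{C_11_char_thm}'' with a complete argument. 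What the paper's heavier computation buys, and your argument does not, is the full description (up to unitary equivalence) of the attainable asymptotic limits as the set $0_l\oplus(I_k+Q)A_{E,C}(I_k+Q)$, which is strictly more information than the trace inequality. One remark: you silently read (iv) as $A=S^{*-1}(0_l\oplus I_k)S^{-1}$, whereas the statement prints $I_l\oplus 0_k$; since (iii) forces $\rank A=k$ and $\dim\ker A=l$, your reading is the only consistent one, so this is a correction of a typo rather than a gap, but it deserves a sentence in a final write-up. All individual steps (the unitarity of $A_1^{1/2}T_1A_1^{-1/2}$ in finite dimension, the existence of $A_{T_1,C}$ via powerboundedness of $T_1$ and Theorem \ref{C^d_Cer_mean_thm}, the majorisation of the constant vector by the spectrum of $A_1^{-1}$, and the verification that your $S$ has unit columns and satisfies $S^*AS=0_l\oplus I_k$) check out.
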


One could ask whether is there any connection between the Ces\`aro asymptotic limit of a matrix and the Ces\`aro asymptotic limit of its adjoint. If $T\in\irM_d$ is contractive then the asymptotic limits satisfies the equality $A_{T^*} = A_T$ (see \cite{Ku1}) moreover, they are the projections onto the subspace $\irH_0(T)^\perp = \irH_0(T^*)^\perp$. In the case of powerbounded matrices usually the subspaces $\irH_0(T)$ and $\irH_0(T^*)$ are different and hence $A_{T^*}$ and $A_T$ differ, too. However, we provide the next connection for $C_{11}$ class $2\times 2$ powerbounded matrices which will be proved in Section \ref{char_sec}.

\begin{theorem}\label{2x2_regularity_thm}
For each $T\in C_{11}(\C^2)$ the harmonic mean of the Ces\`aro asymptotic limits $A_{T,C}$ and $A_{T^*,C}$ is exactly the identity $I$, i. e. 
\begin{equation}\label{2_dim_nice_eq}
A_{T,C}^{-1} + A_{T^*,C}^{-1} = 2I_2. 
\end{equation}
\end{theorem}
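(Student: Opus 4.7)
The plan is to reduce (\ref{2_dim_nice_eq}) to the key algebraic identity
\[
A_{T,C}\cdot A_{T^*,C} = \det(A_{T,C})\cdot I_2, \qquad (\star)
\]
which is special to dimension two. Once $(\star)$ is in hand, Cayley--Hamilton applied to $A := A_{T,C}$ gives $A^2 - \tr(A)\cdot A + \det(A)\cdot I_2 = 0$, i.e.\ $A^{-1} = (\tr(A)\,I_2 - A)/\det(A)$. Since $(\star)$ is equivalent to $A_{T^*,C}^{-1} = A/\det(A)$, adding the two yields $A_{T,C}^{-1} + A_{T^*,C}^{-1} = (\tr(A)/\det(A))\,I_2$. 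By Theorem~\ref{C_11_char_thm}(iii) the eigenvalues $t_1, t_2$ of $A$ satisfy $1/t_1 + 1/t_2 = 2$, which for a $2\times 2$ matrix is the same as $\tr(A)/\det(A) = 2$, and (\ref{2_dim_nice_eq}) follows.

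To establish $(\star)$ I would invoke the re-phrased Sz.-Nagy theorem quoted after Theorem~\ref{C^d_B_lim_Cer_thm} to write $T = VUV^{-1}$ for some $V\in GL_2$ and unitary $U\in\irU_2$. If $U = \lambda I_2$ then $T = \lambda I_2$ with $|\lambda|=1$, both Ces\`aro asymptotic limits equal $I_2$, and $(\star)$ is trivial. Otherwise, absorbing a diagonalizing unitary of $U$ into $V$ and using \eqref{unitary_eqv_eq}, we may assume $U = \diag(\lambda_1,\lambda_2)$ with distinct $\lambda_1,\lambda_2\in\T$.

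Setting $M := V^*V$, the relation $T^{*n}T^n = V^{*-1}(U^{*n}MU^n)V^{-1}$ together with the entrywise formula $(U^{*n}MU^n)_{ij} = M_{ij}(\bar\lambda_i\lambda_j)^n$ shows, since the scalar $\bar\lambda_i\lambda_j\in\T$ equals $1$ precisely when $i=j$, that the Ces\`aro mean of $U^{*n}MU^n$ is $\diag(M_{11},M_{22})$; hence $A_{T,C} = V^{*-1}\diag(M_{11},M_{22})V^{-1}$. Applying exactly the same reasoning to $T^nT^{*n} = V(U^nM^{-1}U^{*n})V^*$ produces $A_{T^*,C} = V\diag((M^{-1})_{11},(M^{-1})_{22})V^*$. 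Multiplying these two formulas, the inner $V^{-1}V$ cancels and what remains is a diagonal computation. This is where the $2\times 2$ miracle enters: the diagonal entries of $M^{-1}$ are the swap of those of $M$ divided by $\det M$, so $M_{11}(M^{-1})_{11} = M_{22}(M^{-1})_{22} = M_{11}M_{22}/\det M$, and the product collapses to $(M_{11}M_{22}/\det M)\cdot I_2$. Since $\det(A_{T,C}) = M_{11}M_{22}/|\det V|^2 = M_{11}M_{22}/\det M$, this establishes $(\star)$.

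The main obstacle is recognising that $A_{T,C}\cdot A_{T^*,C}$ is forced to be a scalar matrix in dimension two; this is a genuinely low-dimensional phenomenon, arising from the swap-symmetry of diagonal entries of $M$ and $M^{-1}$ that does not persist for $d\geq 3$, which is consistent with the theorem being stated only in $\C^2$.
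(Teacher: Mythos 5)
Your proof is correct, and it takes a genuinely different route from the paper's. The paper normalises the columns of the similarity matrix $S$, writes $S$ explicitly in terms of parameters $s,t\in[0,1]$ and unimodular phases, and then computes both $A_{T,C}^{-1}=SS^*$ and $A_{T^*,C}^{-1}=\tilde S\tilde S^*$ (where $\tilde S$ is the column-normalised $S^{*-1}$) entry by entry, observing that the two matrices visibly sum to $2I_2$. You instead keep a general $V\in GL_2$, work with $M=V^*V$, and isolate the structural identity $A_{T,C}A_{T^*,C}=\det(A_{T,C})I_2$, which you then combine with Cayley--Hamilton and the trace condition of Theorem \ref{C_11_char_thm}(iii) (equivalently, $\tr(A_{T,C}^{-1})=\tr(\diag(1/M_{11},1/M_{22})M)=2$). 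Both arguments ultimately rest on the same $2\times 2$ adjugate phenomenon --- inverting a $2\times2$ matrix swaps its diagonal entries up to the determinant --- but your version makes the mechanism explicit and coordinate-free, and it cleanly explains why the statement fails for $d\geq 3$, where the diagonal of $M^{-1}$ is no longer a permutation of that of $M$ over $\det M$; the paper's version has the advantage of producing the two inverse asymptotic limits in closed form. Two minor remarks: the reduction to $U=\diag(\lambda_1,\lambda_2)$ with distinct eigenvalues needs only the rewriting $T=(VW)\diag(\lambda_1,\lambda_2)(VW)^{-1}$ with $W$ a diagonalising unitary of $U$, so the appeal to (\ref{unitary_eqv_eq}) is unnecessary (that identity concerns conjugating $T$ itself); and you should note once that the Ces\`aro limits you manipulate exist by Theorem \ref{C^d_Cer_mean_thm} and are invertible because $T\in C_{11}(\C^2)$, so that $\det(A_{T,C})\neq 0$ and the divisions are legitimate.
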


In Section \ref{norm_of_as_lim_chap} we will deal with operators acting on an arbitrary space and tell some properties of the Ces\`aro asymptotic limits and $L$-asymptotic limits. In \cite{Ge} the author proved that the asymptotic limit of a Hilbert space contraction has norm 1 or 0. The next two results tell us a similar property of Ces\`aro and $L$-asymptotic limits.

\begin{theorem}\label{A_TC_norm}
Assume $T$ is a (not necessarily powerbounded) operator for which $A_{T,C}$ exists and it is not zero. Then the inequality 
\[ \|A_{T,C}\| \geq 1 \] 
is fulfilled.
\end{theorem}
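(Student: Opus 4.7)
My plan is to exploit the $\sqrt{A_{T,C}}$-isometric behaviour of $T$ recorded in (\ref{Cesaro_iter_eq}), which is already stated precisely in the generality we need (its derivation requires only the existence of $A_{T,C}$, not the powerboundedness of $T$). That identity gives $\|\sqrt{A_{T,C}}Th\|=\|\sqrt{A_{T,C}}h\|$ for every $h\in\irH$, and iterating produces
$$\big\|\sqrt{A_{T,C}}T^j h\big\| = \big\|\sqrt{A_{T,C}}h\big\| \qquad (j\in\N,\; h\in\irH).$$

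Next, I would combine this equality with the trivial operator-norm bound $\|\sqrt{A_{T,C}}g\|^2\leq \|A_{T,C}\|\cdot\|g\|^2$ applied to $g=T^j h$. Rearranging gives the $j$-independent lower bound
$$\|T^j h\|^2 \geq \frac{\|\sqrt{A_{T,C}}h\|^2}{\|A_{T,C}\|},$$
which I would then Ces\`aro-average over $j=1,\dots,n$ and pass to the limit using (\ref{Cesaro_as_beh_eq}), obtaining
$$\big\|\sqrt{A_{T,C}}h\big\|^2 = \lim_{n\to\infty}\frac{1}{n}\sum_{j=1}^n \|T^j h\|^2 \geq \frac{\big\|\sqrt{A_{T,C}}h\big\|^2}{\|A_{T,C}\|}.$$
Since $A_{T,C}\neq 0$ one can choose an $h\in\irH$ with $\|\sqrt{A_{T,C}}h\|^2>0$, and dividing through produces the desired inequality $\|A_{T,C}\|\geq 1$.

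There is essentially no hard step in this plan: the whole argument consists of invoking the $\sqrt{A_{T,C}}$-isometric action of $T$, applying the elementary operator-norm bound, and taking a Ces\`aro limit. The only point worth pausing on is the first one, namely that (\ref{Cesaro_iter_eq}) continues to be valid without the powerboundedness hypothesis; but this is precisely the generality in which that identity was already derived in the introduction, so nothing additional is needed.
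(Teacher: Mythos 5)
Your argument is correct, and every step holds in the stated generality: the identity (\ref{Cesaro_iter_eq}) indeed needs only the existence of $A_{T,C}$ (the subtracted term is $\frac1n\|Th\|^2\to 0$ for the fixed vector $h$, with no powerboundedness required), iterating it gives $\|\sqrt{A_{T,C}}T^jh\|=\|\sqrt{A_{T,C}}h\|$, and combining this with $\|\sqrt{A_{T,C}}g\|^2\le\|A_{T,C}\|\,\|g\|^2$ and the Ces\`aro limit (\ref{Cesaro_as_beh_eq}) closes the loop. The paper's proof rests on exactly the same key ingredient, the $\sqrt{A_{T,C}}$-isometric action of $T$ from (\ref{Cesaro_iter_eq}), but deploys it differently: it picks a near-norming unit vector $v$ with $\|\sqrt{A_{T,C}}v\|>\|\sqrt{A_{T,C}}\|-\varepsilon$, uses $\liminf_k\|T^kv\|\le\|\sqrt{A_{T,C}}\|$ to find an index $k_0$ where $\|T^{k_0}v\|$ is nearly minimal, and evaluates $\sqrt{A_{T,C}}$ on the normalized iterate $T^{k_0}v/\|T^{k_0}v\|$, then lets $\eta,\varepsilon\to0$. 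Your version replaces that $\varepsilon$--$\eta$ bookkeeping and the choice of a near-maximizer by a uniform lower bound $\|T^jh\|^2\ge\|\sqrt{A_{T,C}}h\|^2/\|A_{T,C}\|$ followed by Ces\`aro averaging, which works for \emph{any} $h$ outside $\ker\sqrt{A_{T,C}}$; this is arguably cleaner and yields the inequality in one displayed line, at no loss of generality.
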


\begin{theorem}\label{A_TL_norm}
Suppose $L$ is a fixed Banach limit and $T$ is a powerbounded operator for which $A_{T,L} \neq 0$ holds. Then the inequality 
\[ \|A_{T,L}\| \geq 1 \] 
is satisfied.
\end{theorem}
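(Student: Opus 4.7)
The plan is to exploit the shift-invariance of the Banach limit to obtain a strong algebraic identity relating $A_{T,L}$ to the iterates of $T$, and then to play it off against the positivity of $A_{T,L}$.

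First I would observe that the defining identity
\[ \langle A_{T,L}x,y\rangle = \Llim_{n\to\infty} \langle T^n x, T^n y\rangle \qquad (x,y\in\irH) \]
together with shift-invariance of $L$ immediately yields
\[ \langle A_{T,L}Tx,Ty\rangle = \Llim_{n\to\infty} \langle T^{n+1}x,T^{n+1}y\rangle = \Llim_{n\to\infty} \langle T^n x, T^n y\rangle = \langle A_{T,L}x,y\rangle, \]
that is, $T^* A_{T,L} T = A_{T,L}$. Iterating this relation gives $T^{*n} A_{T,L} T^n = A_{T,L}$ for every $n\in\N$.

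Next, for any $x\in\irH$ this identity rewrites as
\[ \langle A_{T,L}x,x\rangle = \langle A_{T,L} T^n x, T^n x\rangle \leq \|A_{T,L}\|\cdot \|T^n x\|^2. \]
Applying the Banach limit $L$ to both sides in the variable $n$, using its positivity and the fact that the left-hand side is constant, and recalling $\Llim_{n\to\infty}\|T^n x\|^2 = \langle A_{T,L} x,x\rangle$, we obtain
\[ \langle A_{T,L}x,x\rangle \leq \|A_{T,L}\|\cdot\langle A_{T,L}x,x\rangle. \]

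Finally, because $A_{T,L}\in\irB_+(\irH)$ is nonzero, we have $\|A_{T,L}\| = \sup_{\|x\|=1}\langle A_{T,L}x,x\rangle > 0$, so there exists a unit vector $x$ with $\langle A_{T,L}x,x\rangle > 0$. Dividing by this positive quantity in the previous display gives $\|A_{T,L}\|\geq 1$, completing the argument. There is no real obstacle here; the only mildly delicate point is the transition from the pointwise inequality to its $L$-limit form, which is justified by the positivity of the Banach limit together with the fact that constants and $\|T^n x\|^2$ are both bounded sequences (the latter by powerboundedness of $T$).
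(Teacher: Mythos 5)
Your proof is correct. It rests on the same two ingredients as the paper's argument --- the invariance $T^*A_{T,L}T=A_{T,L}$ coming from shift-invariance of $L$, and the identity $\Llim_{n\to\infty}\|T^nx\|^2=\langle A_{T,L}x,x\rangle$ --- but the way you extract the conclusion is genuinely different and, in my view, cleaner. The paper works quantitatively: it picks a unit vector $v$ that nearly norms $\sqrt{A_{T,L}}$, uses $\liminf_{k}\|T^kv\|^2\leq \Llim_{k}\|T^kv\|^2\leq\|\sqrt{A_{T,L}}\|^2$ to find an index $k_0$ where $\|T^{k_0}v\|$ is at most $\|\sqrt{A_{T,L}}\|+\eta$, tests $\sqrt{A_{T,L}}$ against the normalized vector $T^{k_0}v/\|T^{k_0}v\|$, and then lets $\eta$ and $\varepsilon$ tend to $0$. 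You instead apply $L$ directly to the pointwise inequality $\langle A_{T,L}x,x\rangle\leq\|A_{T,L}\|\,\|T^nx\|^2$ and cancel the (positive) quantity $\langle A_{T,L}x,x\rangle$; this removes the $\varepsilon$--$\eta$ bookkeeping entirely and does not require a near-maximizing vector, only some $x$ with $\langle A_{T,L}x,x\rangle>0$, which exists since a nonzero positive operator cannot have identically vanishing quadratic form. The one step you flag as delicate --- passing the inequality through $L$ --- is indeed fine: both sides are bounded real sequences (the right-hand side by powerboundedness), and positivity plus linearity of $L$ preserve termwise inequalities, while $L$ of a constant sequence is that constant. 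Note also that your scheme is essentially a squared version of what the paper does for $A_{T,C}$ in Theorem \ref{A_TC_norm}; the squares are genuinely needed here, exactly as the paper remarks, because $L$ is not multiplicative and one only controls $\Llim_n\|T^nx\|^2$, not $\Llim_n\|T^nx\|$.
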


\section{The Ces\`aro and $L$-asymptotic limit of a matrix coincide}\label{C_L-as_lim_sec}

This section is devoted to prove Theorem \ref{C^d_B_lim_Cer_thm} but before that we need some auxiliary results. First, we describe the Jordan decomposition of a powerbounded matrix. For an operator $T\in\irB(\irH)$ the quantity $r(T)$ stands for the spectral radius of $T$. Note that for any operator $B\in\irB(\irH)$ the equation $\lim_{n\to\infty}\|B^n\| = 0$ holds exactly when $r(B)<1$. The verification of this is quite easy for a matrix by the Jordan decomposition theorem but it is also valid in the operator case (see \cite[Proposition 0.4]{Ku2}). The symbol $\diag(\dots)$ expresses a diagonal matrix.

\begin{prop}[Jordan decomposition of powerbounded matrices]\label{J_dec_pwb_mx_prop}
Suppose $T\in\PWB_d$ and let us consider its Jordan decomposition: $SJS^{-1}$. Then we have
\[ J = U\oplus B \] 
with a unitary $U = \diag(\lambda_1,\dots\lambda_k)\in\irM_k$ ($k\in\Z_+ = \N\cup\{0\}$) and a $B\in\irM_{d-k}$ for which $r(B) < 1$.

Conversely, if $J$ has the previous form, then necessarily $T$ is powerbounded.
\end{prop}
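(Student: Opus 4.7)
The plan is to reduce powerboundedness of $T$ to that of its Jordan form $J$ and then analyse Jordan blocks one at a time. Since $T^n = SJ^nS^{-1}$, one has $\|J^n\|\le\|S^{-1}\|\,\|S\|\,\|T^n\|$ and a symmetric reverse inequality, so $T\in\PWB_d$ if and only if $\sup_n\|J^n\|<\infty$. Because $J$ is block-diagonal with Jordan blocks $J_{m_1}(\lambda_1),\dots,J_{m_s}(\lambda_s)$, the power $J^n$ is the direct sum of the individual $J_{m_i}(\lambda_i)^n$, and therefore it suffices to characterise powerboundedness of a single Jordan block.

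For a single block, I would write $J_m(\lambda)=\lambda I+N$ with $N$ nilpotent, $N^m=0$, and $N^{m-1}\ne 0$. The binomial expansion yields
\[
J_m(\lambda)^n \;=\; \sum_{j=0}^{m-1}\binom{n}{j}\lambda^{n-j}N^j\qquad(n\ge m-1).
\]
If $|\lambda|<1$, each summand is bounded in norm by $\binom{n}{j}|\lambda|^{n-j}\|N^j\|\to 0$, so $\|J_m(\lambda)^n\|\to 0$. If $|\lambda|=1$ and $m=1$, the block is just a unimodular scalar, hence trivially powerbounded. In the remaining cases ($|\lambda|>1$, or $|\lambda|=1$ with $m\ge 2$) the leading term $\binom{n}{m-1}\lambda^{n-m+1}N^{m-1}$ has norm at least $\binom{n}{m-1}\|N^{m-1}\|$, and I would show the full sum grows at this rate by evaluating on a cyclic vector $v$ with $N^{m-1}v\ne 0$. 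Thus $J_m(\lambda)$ is powerbounded exactly when $|\lambda|<1$, or $|\lambda|=1$ and $m=1$.

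Collecting blocks according to this dichotomy, the unit-modulus ones are all $1\times 1$ and assemble into a diagonal unitary $U=\diag(\lambda_1,\dots,\lambda_k)$, while the remaining blocks (all with $|\lambda|<1$) form a matrix $B$ with $r(B)<1$. After permuting the blocks, a permutation absorbed into $S$, we obtain $J=U\oplus B$. The converse is immediate: given such a $J$, one has $\|J^n\|\le\|U^n\|+\|B^n\|\le 1+\|B^n\|$, which is bounded because $r(B)<1$ forces $\|B^n\|\to 0$, and then $\|T^n\|\le\|S\|\,\|S^{-1}\|\,\|J^n\|$ is bounded too. The main obstacle is the non-powerboundedness claim when $|\lambda|=1$ and $m\ge 2$: one must rule out cancellation among the binomial terms, which is handled by the test-vector argument above, since the vectors $N^j v$ for $0\le j\le m-1$ are linearly independent when $v$ is cyclic for $N$.
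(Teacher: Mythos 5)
Your proposal is correct and follows essentially the same route as the paper: reduce to the Jordan form by similarity, expand the powers of a single Jordan block via the binomial formula, and conclude that unit-modulus eigenvalues can only occur in $1\times 1$ blocks while the rest have spectral radius below $1$. You are in fact somewhat more careful than the paper (which simply reads the growth off the matrix entries of $J_j^n$), and the only cosmetic slip is that your lower bound $\binom{n}{m-1}\|N^{m-1}\|$ does not by itself handle the case $|\lambda|>1$, $m=1$, where one should instead use $\|\lambda^n I\|=|\lambda|^n\to\infty$.
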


\begin{proof} For the first assertion, let us consider the block-decomposition of $J$: 
\[ J = J_{1}\oplus \dots \oplus J_k \] 
where $J_j$ is a $\lambda_j$-Jordan block. It is clear that powerboundedness is preserved by similarity. Since
\begin{equation}\label{Jordan-block_power_eq}
J_j^n = \left( \begin{matrix}
\lambda_j & 1 & 0 & \dots & 0 \\
0 & \lambda_j & 1 & \dots & 0 \\
0 & 0 & \lambda_j & \ddots & 0 \\
\vdots & \vdots & \ddots & \ddots & \vdots \\
0 & 0 & 0 & \dots & \lambda_j
\end{matrix} \right)^n 
=
\left( \begin{matrix}
\lambda_j^n & n\lambda_j^{n-1} & \binom{n}{2}\lambda_j^{n-2} & \ddots & \ddots \\
0 & \lambda_j^n & n\lambda_j^{n+1} & \ddots & \ddots \\
0 & 0 & \lambda_j^n & \ddots & \ddots \\
\vdots & \vdots & \ddots & \ddots & \ddots \\
0 & 0 & 0 & \dots & \lambda_j^n
\end{matrix} \right),
\end{equation}
holds, we obtain that if $\lambda$ is an eigenvalue of $T$, then either $|\lambda| < 1$, or $|\lambda| = 1$ and the size of any corresponding Jordan-block is exactly $1\times 1$. 

On the other hand, if $J = U\oplus B$, then it is obviously powerbounded, and hence $T\in\PWB_d$.
\end{proof}

The forthcoming theorem says that the Ces\`aro asymptotic limit exists for $T\in\irM_d$ if and only if $T$ is powerbounded.

\begin{theorem}\label{C^d_Cer_mean_thm}
Let $T\in\PWB_d$ and by using the notations of Proposition \ref{J_dec_pwb_mx_prop} let us consider the matrices $J' = U\oplus 0$ and $T' = SJ'S^{-1}$. Then the Ces\`aro asymptotic limits $A_{T,C}$ and $A_{T',C}$ always exist and 
\[ A_{T,C} = A_{T',C}. \]
Conversely, if the Ces\`aro asymptotic limit exists for a matrix, then it is necessarily powerbounded.
\end{theorem}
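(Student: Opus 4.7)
The plan is to reduce everything to the Jordan decomposition provided by Proposition \ref{J_dec_pwb_mx_prop}, so the forward direction becomes a block-matrix computation and the converse an exhibit-of-divergence argument.

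For the forward direction I would write $T = S J S^{-1}$ with $J = U \oplus B$, where $U = \diag(\lambda_1,\dots,\lambda_k)$ is unitary and $r(B)<1$. Setting $G = S^*S$, one has
\[ \frac{1}{n}\sum_{j=1}^{n} T^{*j}T^{j} = S^{*-1}\!\left(\frac{1}{n}\sum_{j=1}^{n} J^{*j} G J^{j}\right)\! S^{-1}, \]
so it suffices to establish convergence of the inner Ces\`aro mean. Block-decomposing $G$ conformally to $J$ as $G = \bigl(\begin{smallmatrix} G_{11} & G_{12} \\ G_{21} & G_{22}\end{smallmatrix}\bigr)$, the matrix $J^{*j} G J^{j}$ has four blocks: $U^{*j}G_{11}U^{j}$, $U^{*j}G_{12}B^{j}$, $B^{*j}G_{21}U^{j}$, and $B^{*j}G_{22}B^{j}$. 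Since $r(B)<1$ implies $\|B^{j}\|\to 0$, while $\|U^{j}\|=1$, each of the last three block sequences is norm-bounded by a null-sequence, so the corresponding Ces\`aro means vanish. For the $(1,1)$-block, exploit diagonality of $U$: the $(p,q)$-entry of $U^{*j}G_{11}U^{j}$ equals $(G_{11})_{pq}\,(\overline{\lambda_p}\lambda_q)^{j}$, and since $\overline{\lambda_p}\lambda_q$ lies on the unit circle, the Ces\`aro mean $\tfrac{1}{n}\sum_{j=1}^{n}(\overline{\lambda_p}\lambda_q)^{j}$ converges (to $1$ when $\lambda_p=\lambda_q$, and to $0$ otherwise, by summing the geometric series). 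Thus $\frac{1}{n}\sum J^{*j}GJ^{j} \to L\oplus 0$ for some explicit matrix $L$ depending only on $G_{11}$ and the $\lambda_p$'s, and consequently $A_{T,C}$ exists. Since $J'^{\,j} = U^{j}\oplus 0$ and $T'^{*j}T'^{j} = S^{*-1}(U^{*j}G_{11}U^{j}\oplus 0)S^{-1}$, the same computation shows $A_{T',C}$ exists and equals $A_{T,C}$.

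For the converse, assume $A_{T,C}$ exists and suppose for contradiction that $T$ is not powerbounded. By the Jordan form, either $T$ has an eigenvalue $\lambda$ with $|\lambda|>1$, or some $\lambda$ with $|\lambda|=1$ has a Jordan block of size $\geq 2$. In the first case, pick an eigenvector $v$; then $\|T^{j}v\|^{2}=|\lambda|^{2j}\|v\|^{2}$ and $\tfrac{1}{n}\sum_{j=1}^{n}|\lambda|^{2j}\to\infty$. In the second case, take Jordan-basis vectors $v,w$ with $Tv=\lambda v$, $Tw=\lambda w + v$; then $T^{j}w = \lambda^{j}w + j\lambda^{j-1}v$, giving
\[ \|T^{j}w\|^{2} = \|w\|^{2} + 2j\operatorname{Re}\!\bigl(\overline{\lambda}\langle w,v\rangle\bigr) + j^{2}\|v\|^{2}, \]
a degree-$2$ polynomial in $j$, so $\tfrac{1}{n}\sum_{j=1}^{n}\|T^{j}w\|^{2}\to\infty$ again. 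Either conclusion contradicts the finiteness of $\langle A_{T,C}w,w\rangle = \lim_{n}\tfrac{1}{n}\sum_{j=1}^{n}\|T^{j}w\|^{2}$.

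The only real subtlety is the vanishing of the off-diagonal and $(2,2)$ blocks in the forward direction; everything else is a clean consequence of $\|B^{j}\|\to 0$, the unimodularity of the diagonal entries of $U$, and the fact that Ces\`aro averaging sends null sequences to zero. I do not anticipate a serious obstacle, only the need to keep the block bookkeeping transparent.
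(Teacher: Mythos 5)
Your proof is correct and follows essentially the same route as the paper: Jordan decomposition $J=U\oplus B$, the cross and $B$-blocks killed by $\|B^j\|\to 0$, and the $(1,1)$-block handled by the Ces\`aro convergence of $(\overline{\lambda_p}\lambda_q)^j$. The only cosmetic difference is in the converse, where you test divergence on explicit (generalized) eigenvectors instead of the paper's uniform-bound-transferred-to-$J$ argument, but both rest on the same dichotomy (an eigenvalue of modulus greater than $1$, or a unimodular Jordan block of size at least $2$).
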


\begin{proof}
For the first part, let us consider the following:
\begin{equation}\label{A_TC_and_AT'C_eq}
\begin{gathered}
 \frac{1}{n}\sum_{j=1}^n T^{*j}T^j - \frac{1}{n}\sum_{j=1}^n T'^{*j}T'^j \\
 = \frac{1}{n}S^{*-1}\Big(\sum_{j=1}^n J^{*j}S^*SJ^j - \sum_{j=1}^n J'^{*j}S^*SJ'^j\Big)S^{-1}  \\
 = S^{*-1}\Big(\frac{1}{n}\sum_{j=1}^n (0\oplus B)^{*j}S^*S(0\oplus B)^j + \frac{1}{n}\sum_{j=1}^n (U\oplus 0)^{*j}S^*S(0\oplus B)^j \\
 + \frac{1}{n}\sum_{j=1}^n (B\oplus 0)^{*j}S^*S(U\oplus 0)^j \Big)S^{-1}.
\end{gathered}
\end{equation}
Since $\lim_{j\to\infty} (0\oplus B)^{j} = 0$, we obtain that $A_{T',C}$ and $A_{T,C}$ co-exist simultaneously and if they exist, they have to be equal.

Now we consider the partial sums of the Ces\`aro asymptotic limit of $T'$: $\frac{1}{n}\sum_{j=1}^n T'^{*j}T'^j$. It is easy to see that multiplying from the right by a diagonal matrix acts as multiplication of the columns by the corresponding diagonal elements. Similarly, for the multiplication from the left action this holds with the rows. We have the next equality:
\begin{equation}\label{limit_eq}
\begin{gathered}
J'^{*j}S^*SJ'^j = \\
\left(\begin{matrix}
\|S e_1\|^2 & \lambda_2^n\overline{\lambda_1}^n \langle S e_1, S e_2\rangle & \dots & \lambda_m^n\overline{\lambda_1}^n \langle S e_1, S e_m\rangle & 0 & \dots & 0\\
\lambda_1^n\overline{\lambda_2}^n \langle S e_2, S e_1\rangle & \|S e_2\|^2 & \dots & \lambda_m^n\overline{\lambda_2}^n \langle S e_2, S e_m\rangle & 0 & \dots & 0 \\
\vdots & \vdots & \ddots & \vdots & \vdots & & \vdots  \\
\lambda_1^n\overline{\lambda_m}^n \langle S e_m, S e_1\rangle & \lambda_2^n\overline{\lambda_m}^n \langle S e_m, S e_2\rangle & \dots & \|S e_m\|^2 & 0 & \dots & 0 \\
0 & 0 & \dots & 0 & 0 & \dots & 0\\
\vdots & \vdots & & \vdots & \vdots & \ddots & \vdots \\
0 & 0 & \dots & 0 & 0 & \dots & 0\\
\end{matrix}\right).
\end{gathered}
\end{equation}
Since 
\[ \lim_{n\to\infty} \Bigg|\frac{1}{n}\sum_{j=1}^n \lambda^j\Bigg| = \lim_{n\to\infty} \frac{|\lambda^n-1|}{n|\lambda-1|} = 0 \]
if $|\lambda|\leq 1, \lambda \neq 1$ (for $\lambda = 1$ the above limit is 1) and multiplying by a fix matrix does not have an effect on the fact of convergence, we can easily infer that $A_{T',C}$ and hence $A_{T,C}$ exists.

For the reverse implication, let us assume that $A_{T,C}$ exists for a $T\in\irM_d$. According to (\ref{Cesaro_as_beh_eq}) there exists a large enough $\widetilde{M}>0$ such that
\[ \frac{1}{n}\sum_{j=1}^n \|T^j h\|^2 = \frac{1}{n}\sum_{j=1}^n \|SJ^jS^{-1} h\|^2 \leq \widetilde{M} \]
holds for each unit vector $h\in\C^d$. Since $S$ is bounded from below, the above inequality holds exactly when 
\[ \frac{1}{n}\sum_{j=1}^n \|J^j h\|^2 \leq M \]
holds for every unit vector $h\in\C^d$ with a large enough bound $M>0$. On the one hand, this implies that $r(J) = r(T) \leq 1$. On the other hand, if there is an at least $2\times 2$ $\lambda$-Jordan block in $J$ where $|\lambda| = 1$, then this above inequality obviously cannot hold for any unit vector (see (\ref{Jordan-block_power_eq})). This ensures the powerboundedness of $T$.
\end{proof}

Now we are in position to prove our first main theorem. Before that let us point out that if a sequence of matrices $\{S_n\}_{n=1}^\infty$ is entry-wise $L$-convergent then 
\[ \Llim_{n\to\infty} XS_n = X\cdot\Llim_{n\to\infty} S_n \] 
and 
\[ \Llim_{n\to\infty} S_nX = (\Llim_{n\to\infty} S_n)\cdot X \] 
hold. This can be easily verified from the linearity of $L$. 

\begin{proof}[Proof of Theorem \ref{C^d_B_lim_Cer_thm}]
The equality $\Llim_{n\to\infty} \lambda^n = \lambda\Llim_{n\to\infty} \lambda^n$ holds for every $|\lambda|\leq 1$ which gives us $\Llim_{n\to\infty} \lambda^n = 0$ for all $\lambda\neq 1, |\lambda|\leq 1$ (the Banach limit is trivially 1 if $\lambda = 1$). Now, if we take a look at equation (\ref{limit_eq}), we can see that $A_{T',L} = A_{T',C}$ holds for every Banach limit. Since 
\[ (0\oplus B)^{*j}S^*S(0\oplus B)^j + (0\oplus U)^{*j}S^*S(0\oplus B)^j + (B\oplus 0)^{*j}S^*S(0\oplus U)^j \to 0 \]
the equation-chain
\[ A_{T,L} = A_{T',L} = A_{T',C} = A_{T,C} \] 
is yielded.
\end{proof}

A natural question arises here. When does the sequence $\{T^{*n}T^n\}_{n=1}^\infty\subseteq\irM_d$ converge? The last theorem of the section is dealing with this question where we will use a theorem of G. Corach and A. Maestripieri. The symbol $\rank(A)$ denotes the rank of the matrix $A$.

\begin{theorem}\label{AT_char_thm}
The following are equivalent for a $T\in \PWB_d$
\begin{itemize}
\item[\textup{(i)}] the sequence $\{T^{*n}T^n\}_{n=1}^\infty\subseteq\irM_d$ converge,
\item[\textup{(ii)}] the eigenspaces of $T$ corresponding to eigenvalues with modulus 1 are mutually orthogonal to each other.
\end{itemize}
Moreover the following three sets coincide
\begin{equation}\label{sets_eq}
\begin{gathered}
\left\{ A\in\irB_+(\C^d) \colon \exists\; T\in\PWB_d, A = \lim_{n\to\infty} T^{*n}T^n \right\}, \\
\left\{ P^*P\in\irB_+(\C^d) \colon P\in\irM_d, P^2 = P \right\}, \\
\left\{ A\in\irB_+(\C^d) \colon \sigma(A)\subseteq \{0\}\cup[1,\infty), \dim\ker(A)\geq \rank E\big((1,\infty)\big) \right\},
\end{gathered}
\end{equation}
where $E$ denotes the spectral measure of $A$.
\end{theorem}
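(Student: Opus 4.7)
The plan is to reduce to the simpler matrix $T' = SJ'S^{-1}$ with $J' = U\oplus 0$ from Theorem \ref{C^d_Cer_mean_thm}. Since $T^{*n}T^n - T'^{*n}T'^n\to 0$ (as shown in the proof of that theorem), $\{T^{*n}T^n\}$ converges if and only if $\{T'^{*n}T'^n\}$ does, and the limits agree. Inspecting formula (\ref{limit_eq}), the $(i,j)$-entry of $J'^{*n}S^*SJ'^n$ is a scalar of the form $\lambda_j^n\overline{\lambda_i}^n\langle Se_i, Se_j\rangle$ for indices $i,j$ in the unimodular block and vanishes otherwise. Since the unimodular sequence $(\lambda_j\overline{\lambda_i})^n$ converges only when $\lambda_i = \lambda_j$, convergence of $\{T^{*n}T^n\}$ is equivalent to the requirement that $\langle Se_i, Se_j\rangle = 0$ whenever $\lambda_i\neq\lambda_j$. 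As the columns $Se_i$ with $i$ in the unimodular block are eigenvectors of $T$ spanning the corresponding eigenspaces, this is exactly condition (ii).

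For the coincidence of the three sets $\irS_1, \irS_2, \irS_3$ in (\ref{sets_eq}) I would first prove $\irS_1\subseteq\irS_2$. Given $T\in\PWB_d$ with $\{T^{*n}T^n\}$ convergent to a limit $A$, decompose $\C^d = V\oplus W$, where $V$ is the sum of the unimodular eigenspaces (which, by (ii), is an \emph{orthogonal} direct sum of them) and $W$ is the root subspace attached to the eigenvalues of modulus strictly less than $1$, on which $T^n\to 0$. Let $P$ denote the (generally oblique) idempotent projection onto $V$ along $W$. For $y = v + w$ with $v\in V$ and $w\in W$, the pairwise orthogonality of the unimodular eigenspaces combined with $|\lambda_i| = 1$ gives $\|T^n v\| = \|v\|$, while $\|T^n w\|\to 0$ and the cross term $\langle T^n v, T^n w\rangle$ vanishes by Cauchy--Schwarz; hence $\|T^n y\|^2\to\|v\|^2 = \|Py\|^2$, and polarization forces $A = P^*P$. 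The reverse inclusion $\irS_2\subseteq\irS_1$ is immediate: if $P$ is idempotent, then $T = P$ lies in $\PWB_d$ (as $T^n = P$ for every $n\geq 1$) and $\lim T^{*n}T^n = P^*P$.

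The final equality $\irS_2 = \irS_3$ is the content of the Corach--Maestripieri theorem cited just before the statement. For $\irS_2\subseteq\irS_3$, choosing a basis adapted to $\ran P\oplus\ker P$ puts $P$ in block form $\bigl(\begin{smallmatrix}I_m & 0\\ Y & 0\end{smallmatrix}\bigr)$ with $m = \rank P$ and $Y$ arbitrary of size $(d-m)\times m$, which makes $P^*P$ unitarily equivalent to $(I_m + Y^*Y)\oplus 0_{d-m}$; both the spectral inclusion $\sigma(P^*P)\subseteq\{0\}\cup[1,\infty)$ and the rank bound on $E((1,\infty))$ then follow from $\rank Y\leq\min(m, d-m)\leq\dim\ker P^*P$. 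For the converse, diagonalize $A = U\diag(\mu_1,\dots,\mu_m,0,\dots,0)U^*$ with $\mu_i\geq 1$; the hypothesis $\rank E((1,\infty))\leq\dim\ker A$ is exactly what permits producing a $(d-m)\times m$ matrix $Y$ with $Y^*Y = \diag(\mu_1-1,\dots,\mu_m-1)$, after which $U\bigl(\begin{smallmatrix}I_m & 0\\ Y & 0\end{smallmatrix}\bigr)U^*$ is an idempotent whose $P^*P$ equals $A$. The main obstacle I anticipate is this last step: verifying cleanly that the dimensional inequality in (iii) is the precise threshold for such a $Y$ to exist and keeping the two bases (the one diagonalizing $A$ and the one adapted to the idempotent) consistent throughout.
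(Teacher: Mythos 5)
Your argument is correct and follows essentially the same route as the paper: the equivalence of (i) and (ii) is read off from (\ref{limit_eq}) after reducing to $T'$, and the limit is identified as $P^*P$ for the oblique idempotent $P$ projecting onto the span of the unimodular eigenspaces along the stable subspace (the paper does this by an explicit computation with a well-chosen $S$, you do it invariantly via the norm identity and polarization, which amounts to the same thing). The only divergence is that the paper simply cites Theorem 6.1 of Corach--Maestripieri for the equality of the second and third sets, whereas you reprove it from the block form $\bigl(\begin{smallmatrix} I_m & 0 \\ Y & 0 \end{smallmatrix}\bigr)$ of an idempotent with respect to $(\ker P)^{\perp}\oplus\ker P$; the step you flagged as a potential obstacle does go through, since a positive semi-definite $m\times m$ matrix $D$ of rank $r$ factors as $Y^*Y$ with $Y$ of size $(d-m)\times m$ exactly when $r\leq d-m$ (take $Y=VD^{1/2}$ for an isometry $V$ of $\operatorname{ran}D$ into $\C^{d-m}$), which is precisely the condition $\rank E\big((1,\infty)\big)\leq\dim\ker(A)$.
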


\begin{proof}
The (ii)$\Longrightarrow$(i) implication is quite straightforward from (\ref{limit_eq}).

For the reverse direction let us consider (\ref{limit_eq}) again. This tells us that if $\lambda_l \neq \lambda_k$, then $\langle S e_l, S e_k\rangle$ has to be 0 which means exactly the orthogonality.

In order to prove the further statement, we just have to take such a $T$ that satisfies (ii). Since the limit of $\{T^{*n}T^n\}_{n=1}^\infty$ exists if and only if the sequence $\{T'^{*n}T'^{n}\}_{n=1}^\infty$ converge (by a similar reasoning to the beginning of the proof of Theorem \ref{C^d_Cer_mean_thm}), we consider 
\[ T'^{*n}T'^n = S^{*-1}(J'^{*n}S^*SJ'^n)S^{-1}. \] 
Trivially, we can take an orthonormal basis in every eigenspace as columnvectors of $S$ and if we do so, then the columnvectors of $S$ corresponding to modulus 1 eigenvalues will form an orthonormal sequence, and the other columnvectors (i. e. the zero eigenvectors) will form an orthonormal sequence, too. But the whole system may fail to be an orthonormal basis. This implies that
\[ \lim_{n\to\infty} T'^{*n}T'^n = S^{*-1}(I\oplus 0)S^{-1} = S^{*-1}(I\oplus 0)S^*S(I\oplus 0)S^{-1}. \]
By Theorem 6.1 of \cite{CM} we get (\ref{sets_eq}).
\end{proof}

\section{The characterisation}\label{char_sec}

In the present section we will prove Theorem \ref{C_11_char_thm}, \ref{non-C11_char_thm} and \ref{2x2_regularity_thm}.

\begin{proof}[Proof of Theorem \ref{C_11_char_thm}] The $(i)\iff(ii)$ part follows from the results of the previous section. We begin with the (i)$\iff$(iv) part. Let us suppose that $T = SUS^{-1}$ holds with an $S\in GL_d$ and a $U = \diag(\lambda_1,\dots\lambda_d)\in \irU_d$. Of course, it can be supposed without loss of generality that $S$ has unit columnvectors (this is just a right choice of eigenvectors). Moreover, if an eigenvalue $\lambda$ has multiplicity more than one, then the corresponding unit eigenvectors (as columnvectors in $S$) can be chosen to form an orthonormal sequence in that eigenspace. Trivially, this does not change $T$. Now considering (\ref{limit_eq}), we get 
\[ \frac{1}{n} \sum_{j=1}^n U^{*j}S^*SU^j \to I \] 
and therefore 
\[ A_{T,C} = S^{*-1}S^{-1}. \]

In order to yield the other implication take an $S$ such that it has unit columnvectors. If we put $\lambda_j$-s to be pairwise different in $U$ and $T = SUS^{-1}$ then we obviously get $A_{T,C} = S^{*-1}S^{-1}$ from equation (\ref{limit_eq}).

\medskip

After that we turn to the (iv)$\iff$(iii) part. By the spectral mapping theorem we have $d = \tr(S^*S) = \tr(SS^*) = \sum_{j=1}^d \frac{1}{t_j}$, where $\tr(\cdot)$ denotes the trace.

In order to show the reverse direction, it would be enough to find such a unitary matrix $U\in\C^{d\times d}$ which satisfies 
\[ \big\|\diag(\sqrt{1/t_1},\dots,\sqrt{1/t_d})\cdot U e_j\big\| = 1. \] 
Indeed, if we chose 
\[S := \diag(\sqrt{1/t_1},\dots,\sqrt{1/t_d})\cdot U,\] 
$SS^*$ would become $\diag(1/t_1,\dots,1/t_d)$ and (\ref{unitary_eqv_eq}) would give what we want. 

The idea is that we put such complex numbers in the entries of $U$ which have modulus $1/\sqrt{d}$, because then the columnvectors of $S$ will be unit ones and we only have to be careful with the orthogonality of the columnvectors of $U$. In fact, the right choice is to consider a constant multiple of a Vandermonde matrix:
\[ U := \Big(\varepsilon^{(j-1)(k-1)}/\sqrt{d}\Big)_{j,k=1}^d \] 
where $\varepsilon = e^{2i\pi/d}$. We show that its columnvectors are orthogonal to each other which will complete the proof. For this we consider $j_1\neq j_2, j_1,j_2\in\{1,2,\dots d\}$ and the inner product
\[ \langle Ue_{j_1}, Ue_{j_1} \rangle = \sum_{k=1}^d \frac{\varepsilon^{(j_1-1)(k-1)}}{\sqrt{d}} \frac{\overline{\varepsilon^{(j_2-1)(k-1)}}}{\sqrt{d}} = \frac{1}{d} \sum_{k=1}^d \varepsilon^{(j_1-j_2)(k-1)} = 0. \]
This shows that $U$ is unitary.
\end{proof}

Now we are able to deal with the case when the stable subspace is non-trivial but first we have to start with a structure theorem of powerbounded matrices. In \cite[Lemma 1.]{Ke_isom_as} L. K\'erchy proved a generalization of \cite[Theorem II.4.1.]{NFBK}. This lemma can be re-phrased as in the next lemma. We say that a powerbounded operator $T$ is of class $C_{0\cdot}$ if $A_{T,L} = 0$, and it is of class $C_{1\cdot}$ if $A_{T,L}$ is injective. We remind the reader that $\irH_0(T)$ is hyperinvariant for $T$.

\begin{lemma}[K\'erchy]\label{Ker_lem}
Let us consider an arbitrary $T\in\PWB(\irH)$ with stable subspace $\irH_0$ and its block-matrix representation with respect to the orthogonal decomposition $\irH_0\oplus\irH_0^\perp$
\[ T = \left(\begin{matrix}
T_0 & R \\
0 & T_1
\end{matrix}\right). \]
Then $T_0\in C_{0\cdot}(\irH_0)$ and $T_1\in C_{1\cdot}(\irH_0^\perp)$.
\end{lemma}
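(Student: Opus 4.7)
I would split the claim into the two cases. For $T_0 \in C_{0\cdot}(\irH_0)$, the hyperinvariance of $\irH_0$ under $T$ gives $T_0 = T|_{\irH_0}$, so any $h\in\irH_0$ has $T_0^n h = T^n h\to 0$ by the very definition of the stable subspace. Consequently $\Llim \|T_0^n h\|^2 = 0$ for every Banach limit $L$, which forces $A_{T_0,L} = 0$ and takes care of the easy half.

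For $T_1\in C_{1\cdot}(\irH_0^\perp)$, fix any Banach limit $L$ and suppose $A_{T_1,L}y = 0$ for some $y\in\irH_0^\perp$; the goal is $y=0$. Let $P_0, P_1$ denote the orthogonal projections onto $\irH_0$ and $\irH_0^\perp$, and set $z_n := P_0 T^n y \in \irH_0$. A short induction based on the upper-triangular block form of $T$ yields $T_1^n y = P_1 T^n y$ and the orthogonal decomposition $T^n y = z_n + T_1^n y$. One more step, combined with $T$-invariance of $\irH_0$, gives the recursion that will be the workhorse of the argument:
\[ z_{n+m} = T_0^m z_n + P_0 T^m T_1^n y \qquad (n,m\geq 0). \]

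The heart of the proof is a Banach-limit identity. Put $\alpha := \Llim_m \|z_m\|^2$; by shift invariance $\alpha = \Llim_m \|z_{n+m}\|^2$ for every fixed $n$. Expanding the right-hand side via the recursion, the terms involving $T_0^m z_n$ vanish in $\Llim_m$ because $z_n\in\irH_0$ makes $T_0^m z_n \to 0$; meanwhile
\[ \|P_0 T^m T_1^n y\|^2 = \|T^m T_1^n y\|^2 - \|T_1^{m+n}y\|^2, \]
whose $\Llim_m$ equals $\langle A_{T,L}T_1^n y, T_1^n y\rangle - 0$ (using shift invariance and the hypothesis $\Llim \|T_1^n y\|^2=0$). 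This pins down $\alpha = \langle A_{T,L}T_1^n y, T_1^n y\rangle \leq \|A_{T,L}\|\cdot\|T_1^n y\|^2$ for all $n$; taking $\Llim_n$ of the outer inequality and invoking monotonicity of $L$ yields $\alpha = 0$. Therefore $\Llim \|T^n y\|^2 = \alpha + \Llim \|T_1^n y\|^2 = 0$, which gives $A_{T,L}y = 0$ and hence $y \in \ker A_{T,L} = \irH_0$; combined with $y\in\irH_0^\perp$, this forces $y = 0$.

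I expect the main obstacle to be precisely this bridging step. A naive unwinding of the recursion produces $z_n = \sum_{k=0}^{n-1} T_0^{n-1-k} R T_1^k y$ (with $R$ the off-diagonal block), and any crude norm bound inflates unboundedly in $n$ because the number of summands grows while $\|T_0^m\|$ merely stays bounded. The trick that sidesteps this is to avoid estimating $z_n$ directly and instead exploit shift invariance of $L$: the Banach limit converts the convolution-like recursion into the single positive quadratic form $\langle A_{T,L}T_1^n y, T_1^n y\rangle$, which then collapses via the elementary inequality $\langle Au,u\rangle\leq\|A\|\|u\|^2$.
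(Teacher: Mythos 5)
The paper does not actually prove this lemma --- it is quoted from K\'erchy's \emph{Isometric Asymptotes of Power Bounded Operators} (Lemma 1 there, generalising Theorem II.4.1 of Sz.-Nagy--Foias) --- so there is no in-paper argument to match yours against. Your proof is correct as a self-contained argument. The $C_{0\cdot}$ half is immediate, as you say, and the $C_{1\cdot}$ half holds up under scrutiny: the recursion $z_{n+m}=T_0^mz_n+P_0T^mT_1^ny$ is right, the cross term in $\|z_{n+m}\|^2$ dies because $\|T_0^mz_n\|\to 0$ while $\|P_0T^mT_1^ny\|$ stays bounded by powerboundedness, the Pythagorean identity $\|P_0T^mT_1^ny\|^2=\|T^mT_1^ny\|^2-\|T_1^{m+n}y\|^2$ uses $P_1T^m|_{\irH_0^\perp}=T_1^m$ correctly, and the final squeeze $\alpha\le\|A_{T,L}\|\,\|T_1^ny\|^2\to 0$ (along a subsequence, or via $\Llim_n$) closes the loop, giving $y\in\ker A_{T,L}\cap\irH_0^\perp=\{0\}$. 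You also correctly diagnose why the naive estimate on $R_ny=\sum_{k=0}^{n-1}T_0^{n-1-k}RT_1^ky$ fails. For comparison, K\'erchy's own route packages your computation into the \emph{isometric asymptote}: the canonical intertwining $X_TT=VX_T$ with $V$ an isometry and $\ker X_T=\irH_0$ gives $\|X_Ty\|=\|X_TT^ny\|=\|X_TT_1^ny\|\le\|X_T\|\,\|T_1^ny\|$ in one line, since $z_n\in\ker X_T$; your identity $\alpha=\langle A_{T,L}T_1^ny,T_1^ny\rangle=\langle A_{T,L}y,y\rangle$ is exactly this, written out with Banach limits instead of the asymptote. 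What your version buys is that it needs no construction of $(X_T,V)$, only the four defining properties of a Banach limit; what the asymptote buys is brevity and reusability of the intertwining relation elsewhere.
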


It is worth noting that usually if we consider such a block upper triangular matrix above with an arbitrary element $R$, then usually $T$ is not powerbounded. However, the powerboundedness is automatic if we assume that $T$ is a matrix.

\begin{lemma}\label{structure_lem}
Suppose that the matrix $T\in\irM_d$ has the block-matrix upper triangular representation above, with respect to an orthogonal decomposition $\irH = \irH'\oplus\irH''$ such that $T_0\in C_{0\cdot}(\irH')$ ($R\in\irB(\irH'',\irH')$ is arbitrary) and $T_1\in C_{1\cdot}(\irH'')$ holds. Then necessarily $T$ is powerbounded and its stable subspace is precisely $\irH'$.
\end{lemma}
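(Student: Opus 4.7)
The plan is to compute the powers $T^n$ explicitly and control them block-by-block, then identify the stable subspace by projecting onto the two summands. Writing
\[ T^n = \left(\begin{matrix} T_0^n & R_n \\ 0 & T_1^n \end{matrix}\right), \qquad R_n = \sum_{j=0}^{n-1} T_0^j\, R\, T_1^{n-1-j}, \]
the diagonal blocks $T_0^n$ and $T_1^n$ are automatically bounded in $n$ because $T_0$ and $T_1$ are powerbounded by hypothesis (being of classes $C_{0\cdot}$ and $C_{1\cdot}$ respectively). So the only thing to check for powerboundedness is that $\|R_n\|$ stays bounded as $n\to\infty$.

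The crucial observation, which I expect to be the main obstacle, is that for a \emph{matrix} in $C_{0\cdot}$ the powers decay exponentially. Indeed, $T_0$ is a powerbounded matrix, so Proposition \ref{J_dec_pwb_mx_prop} gives its Jordan form as $U_0\oplus B_0$ with $U_0$ unitary and $r(B_0)<1$. Inspecting the structure of $A_{T_0,C}$ in formula (\ref{limit_eq}) (and invoking Theorem \ref{C^d_B_lim_Cer_thm} to identify $A_{T_0,L}$ with $A_{T_0,C}$), the unitary part contributes strictly positive diagonal terms $\|S_0 e_j\|^2$; hence $A_{T_0,L}=0$ forces $U_0$ to be absent, so $r(T_0)<1$ and $\|T_0^n\|\leq C\rho^n$ for some $\rho\in(0,1)$. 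Coupled with the uniform bound $\|T_1^n\|\leq M$, this yields
\[ \|R_n\| \leq \|R\|\sum_{j=0}^{n-1} \|T_0^j\|\,\|T_1^{n-1-j}\| \leq \|R\|\,M\,C\sum_{j=0}^{\infty}\rho^j < \infty, \]
so $\{T^n\}$ is uniformly bounded and $T\in\PWB(\irH)$.

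For the identification $\irH_0(T)=\irH'$, the inclusion $\irH'\subseteq\irH_0(T)$ is immediate: for $h_1\in\irH'$ the orbit is $T^n h_1 = T_0^n h_1 \to 0$ by the exponential decay just established. Conversely, take $h = h_1+h_2$ with $h_1\in\irH'$, $h_2\in\irH''$, and assume $T^n h\to 0$. The $\irH''$-component of $T^n h$ is exactly $T_1^n h_2$, hence $\|T_1^n h_2\|\leq\|T^n h\|\to 0$, which places $h_2$ in $\irH_0(T_1)=\ker A_{T_1,L}$. Since $T_1\in C_{1\cdot}(\irH'')$ means $A_{T_1,L}$ is injective, we conclude $h_2=0$ and therefore $h=h_1\in\irH'$. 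This gives $\irH_0(T)\subseteq\irH'$, finishing the proof.
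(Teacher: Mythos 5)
Your proof is correct and follows essentially the same route as the paper's: compute $T^n$ blockwise, observe that $T_0\in C_{0\cdot}(\irH')$ forces $r(T_0)<1$ in finite dimension so that $\|T_0^n\|$ decays geometrically, bound $R_n$ by a convergent geometric series, and identify $\irH_0(T)$ by projecting orbits onto $\irH''$ and using injectivity of $A_{T_1,L}$. Your version is in fact slightly more careful than the paper's in two places: your indexing of $R_n$ is the correct one, and your converse inclusion handles a general vector $h=h_1+h_2$ rather than only $h\in\irH''$.
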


\begin{proof}
By an easy calculation we get
\[ T^n = \left(\begin{matrix}
T_0^n & R_n \\
0 & T_1^n
\end{matrix}\right) \]
where 
\[ R_n = \sum_{j=1}^{n} T_0^{n-j}RT_1^{j}. \] 
Let us assume that $\|T_0^n\|,\|T_1^n\|,\|R\| < M$ for each $n\in\N$. In order to see the powerboundedness of $T$, one just has to use that $\|T_0^n\|\leq r^n$ holds for large $n$-s with a number $r<1$, since we are in a finite dimensional space. It is quite straightforward that $\irH' \subseteq \irH_0(T)$ and since for any vector $x\in\irH''$ the sequence $\{T_1^n x\}_{n=1}^\infty$ does not converge to 0, then neither does $\{T^n x\}_{n=1}^\infty$. Consequently we obtain that $\irH' = \irH_0(T)$.
\end{proof}

The above proof works for those kind of infinite dimensional operators as well, for which $r(T_0) < 1$ is satisfied.

Now, we are in position to show the characterisation in the non-$C_{11}$ case.

\begin{proof}[Proof of Theorem \ref{non-C11_char_thm}] The equivalence of (i), (ii) and (iv) can be handled very similarly as in Theorem \ref{C_11_char_thm}.

\medskip

\emph{The (i)$\iff$(iii) part:} Let us write
\[ T^* = \left(\begin{matrix}
0 & \tilde{R} \\
0 & E^*
\end{matrix}\right) = \left(\begin{matrix}
0 & RE^* \\
0 & E^*
\end{matrix}\right) \quad (E\in C_{11}(\C^k)) \]
for the adjoint of a powerbounded matrix (up to unitarily equivalence) which is general enough for our purposes (see Theorem \ref{C^d_Cer_mean_thm} and Lemma \ref{structure_lem}). Since $E^*$ is invertible it is equivalent to investigate the two forms above. From the equation
\[ T^{*n}T^n = \left(\begin{matrix}
0 & RE^{*n} \\
0 & E^{*n}
\end{matrix}\right)
\left(\begin{matrix}
0 & 0 \\
E^nR^* & E^n
\end{matrix}\right) =
\left(\begin{matrix}
RE^{*n}E^nR^* & RE^{*n}E^n \\
E^{*n}E^nR^* & E^{*n}E^n
\end{matrix}\right),
\]
we get
\[ A_{T,C} = \left(\begin{matrix}
RA_{E,C}R^* & RA_{E,C} \\
A_{E,C}R^* & A_{E,C}
\end{matrix}\right). \]

Calculating the null-space of $A_{T,C}$ suggests that for the block-diagonalization we have to take the following invertible matrix
\[ X = 
\left(\begin{matrix}
I_l & R \\
-R^* & I_k
\end{matrix}\right)
\left(\begin{matrix}
(I_l + RR^*)^{-1/2} & 0 \\
0 & (I_k + R^*R)^{-1/2}
\end{matrix}\right). \]
The inverse of $X$ is
\[ X^{-1} = \left(\begin{matrix}
(I_l + RR^*)^{-1/2} & 0 \\
0 & (I_k + R^*R)^{-1/2}
\end{matrix}\right)
\left(\begin{matrix}
I_l & -R \\
R^* & I_k
\end{matrix}\right) \]
which shows that $X\in\irU_d$. With a straightforward calculation we derive
\[ X^{-1}A_{T,C}X = 0_l \oplus \big[(I_k + R^*R)^{1/2}A_{E,C}(I_k + R^*R)^{1/2}\big]. \]
Here $(I_k + R^*R)^{1/2} = I_k + Q$ holds with a positive semi-definite $Q$ for which $\rank(Q)\leq l$ holds and conversely, every such $I_k+Q$ can be given in the form $(I_k + R^*R)^{1/2}$. So the set of all $L$-asymptotic limits of $l$-stable powerbounded matrices (again, up to unitarily equivalence) are given by:
\[ 0_l \oplus \left[(I_k + Q)A_{E,C}(I_k + Q)\right] \quad (Q\in\irB_+(\C^k), \rank(Q) < l, E\in C_{11}(\C^k)) \]
and every positive operator of such form is the $L$-asymptotic limit of an $l$-stable powerbounded matrix.

Finally, let us write
\[ (I_k+Q)^{-2} = U\cdot\diag(q_1,\dots q_k)\cdot U^* \]
and 
\[ U^*A_{E,C}^{-1}U = (\alpha_{i,j})_{i,j=1}^k, \]
where $U \in \irU_k$ and $q_j\leq 1$ for each $1\leq j\leq k$. Therefore
\[ \frac{1}{t_1} + \dots + \frac{1}{t_k} = \tr\big((I_k+Q)^{-1}A_{E,C}^{-1}(I_k+Q)^{-1}\big) = \tr\big(A_{E,C}^{-1}(I_k+Q)^{-2}\big) \]
\[ = \tr\big(U^*A_{E,C}^{-1}U\cdot\diag(q_1,\dots q_k)\big) = \sum_{j=1}^k q_j \alpha_{j,j} \leq \tr(A_{E,C}^{-1}) = k, \]
is fulfilled for each $l$-stable powerbounded matrix $T$. For the reverse, set some positive numbers $t_1,\dots t_k$ such that 
\[ \frac{1}{t_1} + \dots + \frac{1}{t_k} \leq k \]
is valid. If we take such a $0<c<1$ for which
\[ \frac{1}{c\cdot t_1} + \frac{1}{t_2} + \dots + \frac{1}{t_k} = k, \] 
then obviously $A = \diag(c\cdot t_1, t_2\dots t_k)$ arises as the Ces\`aro asymptotic limit of a powerbounded operator from $C_{11}(\C^k)$. Therefore, if we take the rank-one $Q = \diag(1/\sqrt{c}-1,0,\dots 0)$ positive semi-definite matrix, we get $(I_k+Q)A(I_k+Q) = \diag(t_1, t_2\dots t_k)$. This ends the proof.
\end{proof}

After the proof of the characterisations we give the proof of Theorem \ref{2x2_regularity_thm}. 

\begin{proof}[Proof of Theorem \ref{2x2_regularity_thm}]
Let $T = S\diag(\lambda_1,\lambda_2)S^{-1}$ with $|\lambda_1| = |\lambda_2| = 1$ and $S\in GL_2$ for which the columnvectors are unit ones. We have learned from the proof of Theorem \ref{C_11_char_thm} that in this case we have $A_{T,C}^{-1} = SS^*$. The matrix $S$ can be written in the following form
\[ S = \left(\begin{matrix}
\mu_{1,1}s & \mu_{1,2}t \\
\mu_{2,1}\sqrt{1-s^2} & \mu_{2,2}\sqrt{1-t^2}
\end{matrix}\right) \]
where $|\mu_{j,l}| = 1$ ($j,l\in\{1,2\}$) and $s,t\in[0,1]$ provided that the above matrix is invertible.

By taking $\diag(1/\mu_{1,1},1/\mu_{2,1})\cdot S$ instead of $S$ and using (\ref{unitary_eqv_eq}), we can see that without loss of generality $\mu_{1,1} = \mu_{2,1} = 1$ can be assumed, so we have
\[ S = \left(\begin{matrix}
s & \mu_{1,2}t \\
\sqrt{1-s^2} & \mu_{2,2}\sqrt{1-t^2}
\end{matrix}\right) \]
and thus
\[ A_{T,C}^{-1} = SS^* = \left(\begin{matrix}
s^2+t^2 & s\sqrt{1-s^2} + t\sqrt{1-t^2}\overline{\mu_{22}}\mu_{12} \\
s\sqrt{1-s^2} + t\sqrt{1-t^2}\mu_{22}\overline{\mu_{12}} & 2-s^2-t^2
\end{matrix}\right). \]

Finally, since $T^* = S^{*-1}\diag(\overline{\lambda_1},\overline{\lambda_2})S^*$ and we have 
\[ S^{*-1} = \frac{1}{s\sqrt{1-t^2}\overline{\mu_{22}} - t\sqrt{1-s^2}\overline{\mu_{12}}} \left(\begin{matrix}
\sqrt{1-t^2}\overline{\mu_{22}} & -\sqrt{1-s^2} \\
-t\overline{\mu_{12}} & s
\end{matrix}\right) \]
we immediately obtain
\[ A_{T^*,C}^{-1} = 
\left(\begin{matrix}
\sqrt{1-t^2}\overline{\mu_{22}} & -\sqrt{1-s^2} \\
-t\overline{\mu_{12}} & s
\end{matrix}\right)
\left(\begin{matrix}
\sqrt{1-t^2}\mu_{22} & -t\mu_{12} \\
-\sqrt{1-s^2} & s
\end{matrix}\right) \]
\[ = 
\left(\begin{matrix}
2-s^2-t^2 & -s\sqrt{1-s^2} - t\sqrt{1-t^2}\overline{\mu_{22}}\mu_{12} \\
-s\sqrt{1-s^2} - t\sqrt{1-t^2}\mu_{22}\overline{\mu_{12}} & s^2+t^2
\end{matrix}\right) \]
and this implies (\ref{2_dim_nice_eq}).
\end{proof}

In the proof we used that the inverse of a $2\times 2$ matrix can be expressed quite nicely. A natural question arises, what happens in higher dimension? If the dimension of $\irH$ is infinite then we can get counterexamples quite easily. Let us consider a weighted bilateral shift which has weights 1 everywhere except for one weight which is 1/2. This trivially defines a contraction and hence both $A_{T,C} = A_T$ and $A_{T^*,C} = A_{T^*}$ hold. Moreover, $A_T$ and $A_{T^*}$ are invertible which implies that $T$ is similar to a unitary. But $A_T, A_{T^*} \leq I$, $A_T \neq I$ and $A_{T^*} \neq I$ which implies $A_{T,C}^{-1} + A_{T^*,C}^{-1} \geq 2I$ and $A_{T,C}^{-1} + A_{T^*,C}^{-1} \neq 2I$. 

In the matrix case we do not have such an easy counterexample. However, computations tell us that usually (\ref{2_dim_nice_eq}) does not hold even for $3\times 3$ matrices. For example if we consider 
\[ T = \left(\begin{matrix}
i & 2 & 1 \\
0 & 1 & i \\
1 & 0 & 4
\end{matrix}\right)
\left(\begin{matrix}
1 & 0 & 0 \\
0 & -1 & 0 \\
0 & 0 & i
\end{matrix}\right)
\left(\begin{matrix}
i & 2 & 1 \\
0 & 1 & i \\
1 & 0 & 4
\end{matrix}\right)^{-1} \]
then we get that the eigenvalues of $A_{T,C}^{-1} + A_{T^*,C}^{-1}$ are approximately the numbers $1.27178, 2.1285$ and $2.59972$.

Of course, a natural question arises.

\begin{question} 
\textup{How can we describe the set of all pairs $\{(A_{T,C},A_{T^*,C})\colon T\in\PWB_d\}$ if $d\in\N$ is fixed?}
\end{question}

A weaker form of the question above is the following.

\begin{question} 
\textup{How can we describe the set of all pairs $\{(A_{T,C},A_{T^*,C})\colon T\in C_{11}(\C^d)\}$ if $d\in\N$ is fixed?}
\end{question}

As a consequence of Theorem \ref{2x2_regularity_thm} and the Corach-Maestripieri theorem (a 1-stable $2\times 2$ powerbounded matrix is always a projection) we could do these descriptions very easily in the case when $d = 2$, but because of its easiness, we omit it in this paper.

\section{Some properties in arbitrary dimension} \label{norm_of_as_lim_chap}

In the last section we prove Theorem \ref{A_TC_norm} and \ref{A_TL_norm} and we also give some examples.

\begin{proof}[Proof of Theorem \ref{A_TC_norm}]
Suppose that $0 < \|A_{T,C}\|$, we will show that in this case $1 \leq \|A_{T,C}\|$ holds as well. Consider a unit vector $v\in\irH$ and an arbitrarily small number $\varepsilon > 0$ for which
\[ \big\|\sqrt{A_{T,C}}\; v\big\| > \big(\big\|\sqrt{A_{T,C}}\big\|-\varepsilon\big) \]
is satisfied. By (\ref{Cesaro_iter_eq}):
\[ \Bigg\|\sqrt{A_{T,C}} \frac{T^k v}{\|T^k v\|}\Bigg\| = \frac{\|\sqrt{A_{T,C}} v\|}{\|T^k v\|} > \frac{\big(\big\|\sqrt{A_{T,C}}\big\|-\varepsilon\big)}{\|T^k v\|}. \]
Since $\liminf_{k\to\infty} \|T^k v\| \leq \|\sqrt{A_{T,C}}\|$, for every $\eta > 0$ there exists a $k_0\in\N$ for which $\|T^{k_0} v\| \leq \|\sqrt{A_{T,C}}\|+\eta$ holds. This implies that
\[ \Bigg\|\sqrt{A_{T,C}} \frac{T^{k_0} v}{\|T^{k_0} v\|}\Bigg\| > \frac{(\|\sqrt{A_{T,C}}\|-\varepsilon)}{\|T^{k_0} v\|} \geq \frac{(\|\sqrt{A_{T,C}}\|-\varepsilon)}{\|\sqrt{A_{T,C}}+\eta\|}. \]
But this is true for all $\eta > 0$, therefore
\[ \sqrt{\|A_{T,C}\|} = \big\|\sqrt{A_{T,C}}\big\| \geq \frac{\big(\big\|\sqrt{A_{T,C}}\big\|-\varepsilon\big)}{\big\|\sqrt{A_{T,C}}\big\|}. \]
Since $\varepsilon > 0$ was arbitrarily small, we conclude $\|A_{T,C}\| \geq 1$.
\end{proof}

The next proof is similar to the one above, but we note that the squares are needed, because the Banach limits are not multiplicative.

\begin{proof}[Proof of Theorem \ref{A_TL_norm}]
Assume that $0 < \|A_{T,L}\|$ happens. Take a vector $v\in\irH, \|v\| = 1$ and an arbitrarily small number $\varepsilon > 0$ such that 
\[ \Llim_{n\to\infty} \|T^n v\|^2 = \big\|\sqrt{A_{T,L}} v\big\|^2 > \big(\big\|\sqrt{A_{T,L}}\big\|-\varepsilon\big)^2 \]
is satisfied. Consider now the following inequality:
\[ \Bigg\|\sqrt{A_{T,L}} \frac{T^k v}{\|T^k v\|}\Bigg\|^2 = \frac{\|\sqrt{A_{T,L}} v\|^2}{\|T^k v\|^2} > \frac{\big(\big\|\sqrt{A_{T,L}}\big\|-\varepsilon\big)^2}{\|T^k v\|^2}. \]
Since $\liminf_{k\to\infty} \|T^k v\|^2 \leq \Llim_{k\to\infty} \|T^k v\|^2 \leq \|\sqrt{A_{T,L}}\|^2$, for every $\eta > 0$ there exists a $k_0\in\N$ for which $\|T^{k_0} v\|^2 \leq (\|\sqrt{A_{T,L}}\|+\eta)^2$ holds. This suggests that
\[ \Bigg\|\sqrt{A_{T,L}} \frac{T^{k_0} v}{\|T^{k_0} v\|}\Bigg\|^2 > \frac{(\|\sqrt{A_{T,L}}\|-\varepsilon)^2}{\|T^{k_0} v\|^2} \geq \frac{(\|\sqrt{A_{T,L}}\|-\varepsilon)^2}{\|\sqrt{A_{T,L}}+\eta\|^2}. \]
Since this holds for every $\eta > 0$, we infer that
\[ \|A_{T,L}\| = \big\|\sqrt{A_{T,L}}\big\|^2 \geq \frac{\big(\big\|\sqrt{A_{T,L}}\big\|-\varepsilon\big)^2}{\big\|\sqrt{A_{T,L}}\big\|^2}. \]
In the beginning we could choose an arbitrarily small $\varepsilon > 0$, hence $\|A_{T,L}\| \geq 1$ is verified.
\end{proof}

Now we are intend to give some examples. First we provide an example for such a powerbounded weighted shift operator $T$ for which the $L$-asymptotic limit really depends on the choice of the particular Banach limit $L$ and moreover, the Ces\`aro asymptotic limit of $T$ exists. We will use a characterisation from \cite{Lo} of all the possible Banach limits of a bounded real sequence. Namely, G. G. Lorentz proved that for every $\underline{x}\in\ell^\infty$ real sequence the equality
\[ \big\{\Llim_{n\to\infty} x_n \colon L\in\irB\big\} = [q'(\underline{x}),q(\underline{x})] \subseteq \R \]
holds where $\irB$ denotes the set of all Banach limits and
\[ q(\underline{x}) = \inf\Big\{\limsup_{k\to\infty} \frac{1}{p}\sum_{j=1}^{p} x_{n_j+k} \colon p\in\N, n_1,\dots n_p \in\N \Big\}, \]
\[ q'(\underline{x}) = \sup\Big\{\liminf_{k\to\infty} \frac{1}{p}\sum_{j=1}^{p} x_{n_j+k} \colon p\in\N, n_1,\dots n_p \in\N \Big\}. \]

\begin{example}
\textup{Let us fix an orthonormal basis $\{e_j\}_{j\in\N}$ and define $T$ in the following way}
\[ T e_j = \left\{ \begin{matrix}
\sqrt{2}\cdot e_{j+1} & j = 3^l \textup{ for some } l\in\N \\
\sqrt{1/2}\cdot e_{j+1} & j = 3^l+l \textup{ for some } l\in\N \\
e_{j+1} & \textup{otherwise}
\end{matrix}\right.. \]
\textup{Since $\|T^n\| = \sqrt{2}$ holds for each $n\in\N$, $T$ is powerbounded. A rather easy calculations shows that the equation}
\[ T^{*n}T^n e_1 = \left\{ \begin{matrix}
2 e_1 & 3^l \leq n < 3^l+l \textup{ for some } l\in\N \\
e_1 & \textup{otherwise}
\end{matrix}\right. \]
\textup{is valid. Therefore by Lorentz's characterisation we get}
\[ \big\{\langle A_{T,L} e_1, e_1\rangle \colon L\in\irB\big\} = [1,2]. \]

\textup{It is quite easy to see that the sequence $\{T^{*n}T^n\}_{n=1}^\infty$ consists of diagonal operators (with respect to our fixed orthonormal basis). Thus $A_{T,C}$ is also diagonal and the convergence $\frac{1}{n}\sum_{j=1}^n T^{*j}T^j \to A_{T,C}$ holds in the strong operator topology in case when $A_{T,C}$ exists. But the existence of the Ces\`aro limit can be verified by a very simple calculation, in fact we have $A_{T,C} = I$.}
\end{example}

\smallskip

In what follows we show two further examples concerning the existence of the Ces\`aro asymptotic limit and powerboundedness. It was mentioned in the Introduction that none of them implies the other one which will be illustrated bellow.

\begin{example}
\textup{We define $T$ with the equation}
\[ T e_j = \left\{ \begin{matrix}
\sqrt{2}\cdot e_{j+1} & j = 3^l \textup{ for some } l\in\N \\
\sqrt{1/2}\cdot e_{j+1} & j = 2\cdot 3^l \textup{ for some } l\in\N \\
e_{j+1} & \textup{otherwise}
\end{matrix}\right.. \]
\textup{Clearly}
\[ T^{*n}T^n e_1 = \left\{ \begin{matrix}
2 \cdot e_1 & 3^l \leq n < 2\cdot 3^l \textup{ for some } l\in\N \\
e_1 & \textup{otherwise}
\end{matrix}\right. \]
\textup{holds. The Ces\`aro means of the sequence $\{\langle T^{*n}T^n e_1,e_1\rangle\}_{n\in\N}$ does not converge, hence the the Ces\`aro asymptotic limit of $T$ does not exist. On the contrary, $\|T^n\| = \sqrt{2}$ is satisfied for every $n\in\N$, thus $T$ is powerbounded.}
\end{example}

\begin{example}
\textup{Our shift operator is defined as follows}
\[ T e_j = \left\{ \begin{matrix}
\sqrt{2}\cdot e_{j+1} & 3^l \leq j < 3^l + l \textup{ for some } l\in\N \\
e_{j+1} & \textup{otherwise}
\end{matrix}\right.. \]
\textup{Clearly it is not powerbounded, because $\|T^n\| = \sqrt{2}^n$. But on the other hand, it is not hard to see that the Ces\`aro means of the sequence $\{T^{*n}T^n\}_{n\in\N}$ of positive operators converge strongly to $I=A_{T,C}$.}
\end{example}

\bigskip

\textbf{Acknowledgement.} The author emphasises his thank to L. K\'erchy and L. Ozsv\'art for some useful suggestions.

\bibliographystyle{ams}

\bigskip

\noindent {\sc Bolyai Institute, University of Szeged, Aradi v\'ertan\'uk tere 1, H-6720, Szeged, Hungary} and

\noindent {\sc MTA-DE "Lend\"ulet" Functional Analysis Research Group, Institute of Mathematics, University of Debrecen, H-4010 Debrecen, P.O. Box 12, Hungary

\bigskip

\noindent E-mail address: gehergy@math.u-szeged.hu

\noindent URL: http://www.math.u-szeged.hu/~gehergy/}

\end{document}